\theoremstyle{plain}
\newtheorem{thm}{Theorem}[section]
\newtheorem{prop}[thm]{Proposition}
\newtheorem{cor}[thm]{Corollary}
\newtheorem{lemma}[thm]{Lemma}
\theoremstyle{definition}
\newtheorem{defn}[thm]{Definition}
\newtheorem{rmk}[thm]{Remark}
\newtheorem{rmks}[thm]{Remarks}
\newtheorem{ex}[thm]{Example}
\newtheorem{exs}[thm]{Examples}
\newcommand{\PP}{\mathbb{P}}
\newcommand{\NN}{\mathbb{N}}
\newcommand{\ZZ}{\mathbb{Z}}
\newcommand{\OO}{\mathcal{O}}
\newcommand{\I}{\mathcal{I}}
\newcommand{\OP}[1]{\OO_{\PP^{#1}}}
\newcommand{\Y}{\mathcal{Y}}
\newcommand{\X}{\mathcal{X}}
\newcommand{\Z}{\mathcal{Z}}
\newcommand{\mmax}{{\rm max}}
\newcommand{\hgt}{\mbox{ht}\;}
\newcommand{\Proj}{\mbox{Proj}\,}
\title{G-biliaison of ladder Pfaffian varieties}
\author{E. De Negri}
\address{Dipartimento di Matematica \\
Universit\`a di Genova, \hfil\break\indent via Dodecaneso 35,
16146 Genova, Italy}
\email{denegri@dima.unige.it}
\author{E. Gorla}
\address{Institut f\"ur Mathematik
\\ Universit\"at Z\"urich, \hfil\break\indent Winterthurerstrasse
190, CH-8057 Z\"urich, Switzerland}
\email{elisa.gorla@math.unizh.ch}
\keywords{G-biliaison, Gorenstein liaison, pfaffian, ladder,
  complete intersection, 
arithmetically Gorenstein scheme, arithmetically Cohen-Macaulay
scheme}
\subjclass{14M06, 13C40, 14M12}
\thanks{The second author was partially supported by the Swiss
  National Science Foundation (grant no. 107887) and 
by the Max-Planck-Institut f\"ur Mathematik, Bonn. She also
wishes to express her gratitude to the Mathematics Department of the
University of Genova, where part of the work was done.}
\begin{document}

\maketitle

{\bf Abstract:} The ideals generated by pfaffians of mixed size
contained in a subladder of a skew-symmetric matrix of indeterminates
define arithmetically Cohen-Macaulay, projectively normal, reduced and
irreducible projective varieties. We show that these varieties belong
to the G-biliaison class of a complete intersection. In particular,
they are glicci.

\section*{Introduction}

Pfaffian ideals and the varieties that they define have been studied
both from the algebraic and from the geometric point of view.  In
\cite{a79} Avramov showed that the ideals generated by pfaffians of
fixed size define reduced and irreducible, projectively normal
schemes. In this article, we study the ideals generated by pfaffians
of mixed size contained in a subladder of a skew-symmetric matrix of
indeterminates. Ideals generated by pfaffians of the same size
contained in a subladder of a skew-symmetric matrix of indeterminates
were already studied by the first author. In~\cite{dn98} it is shown
that they define irreducible projective varieties, which are
arithmetically Cohen-Macaulay and projectively normal. A necessary and
sufficient condition for these schemes to be arithmetically Gorenstein
is given in terms of the vertices of the defining ladder. In
\cite{dn95}, one-cogenerated ideals of pfaffians are studied. The
deformation properties of schemes defined by pfaffians of fixed size
of a skew-symmetric matrix are studied in~\cite{kl78a} and~\cite{KL}.

In this paper, we study ladder ideals of pfaffians of mixed size from
the point of view of liaison theory (see \cite{mi98} for an
introduction to the subject, definitions and main results). A central
open question in liaison theory asks whether every arithmetically
Cohen-Macaulay projective scheme is glicci (i.e. whether it belongs to
the G-liaison class of a complete intersection of the same
codimension). Migliore and Nagel have shown that the question has an
affirmative answer up to deformation (see~\cite{mi02}). The main
result of this paper is that ladder pfaffian varieties belong to the
G-biliaison class of a linear variety. In particular they are glicci.
The result is a natural extension to ideals of pfaffians of the
results established by the second author in~\cite{go05u},
\cite{go05u2}, and~\cite{go06u} for ideals of minors.

In the first section we study the ideals generated by pfaffians of
mixed size contained in a subladder of a skew-symmetric matrix of
indeterminates. We prove that they define reduced and irreducible
projective schemes (see Proposition~\ref{primeCM}), that we call
ladder determinantal varieties. These varieties are shown to be
arithmetically Cohen-Macaulay and projectively normal in
Proposition~\ref{primeCM}. A localization argument is crucial for
extending these properties from the case of fixed size pfaffians of
ladders to the case when the pfaffians have mixed size (see
Proposition~\ref{local}). In Proposition~\ref{codim} we compute the
codimension of ladder pfaffian varieties.

Section 2 contains the liaison results. In Theorem~\ref{biliaison} we
prove that ladder pfaffian varieties belong to the G-biliaison class
of a linear variety. Using standard liaison results, we conclude in
Corollary~\ref{glicci} that they are (evenly) G-linked to a complete
intersection.

\section{Pfaffian ideals of ladders and ladder pfaffian varieties}

Let $V$ be a variety in $\PP^r=\PP^r_K$, where $K$ is an algebraically
closed field of arbitrary characteristic. Let $I_V$ be the
saturated homogeneous ideal
associated to $V$ in the coordinate ring
of $\PP^r$. 
Let $\I_V\subset \OP{r}$ be the ideal sheaf of~$V$.
Let $W$ be a scheme that contains $V$. We denote by $\I_{V|W}$
the ideal sheaf of $V$ restricted to $W$, i.e. the quotient sheaf $\I_V/\I_W$.

Let $X=(x_{ij})$ be an $n\times n$ skew-symmetric matrix of indeterminates.
In other words, the entries
$x_{ij}$ with $i<j$ are indeterminates, $x_{ij}=-x_{ji}$ for
$i>j$, and $x_{ii}=0$ for all $i=1,...,n$.
Let $K[X]=K[x_{ij} \;|\; 1\leq i<j\leq n ]$ be the polynomial ring associated 
to $X$. Given a nonempty subset ${\mathcal U} =\{u_1,...,u_{2p} \}$  of  
$\{1,...,n\}$ we denote by $[u_1,\cdots,u_{2p}]$ 
the {\em pfaffian} of the matrix $(x_{ij})_{i\in{\mathcal U},j\in {\mathcal U}}$. 

\begin{defn}\label{ladd}
A {\em ladder} $\mathcal Y$ of $X$ is a subset of the set
 $\{(i,j)\in\NN^2 \;|\; 1\le i,j\le n\}$
with the following properties :
\begin{enumerate}
\item if $(i,j)\in {\mathcal Y}$ then $(j,i)\in {\mathcal Y}$,
\item if  $i<h,j>k$ and $(i,j),(h,k)$ belong to $\mathcal Y$, then also
$(i,k),(i,h),(h,j),(j,k)$ belong to $\mathcal Y$.
\end{enumerate}

\begin{figure}
\centering%
\includegraphics[height=20cm , width=10cm,keepaspectratio]  {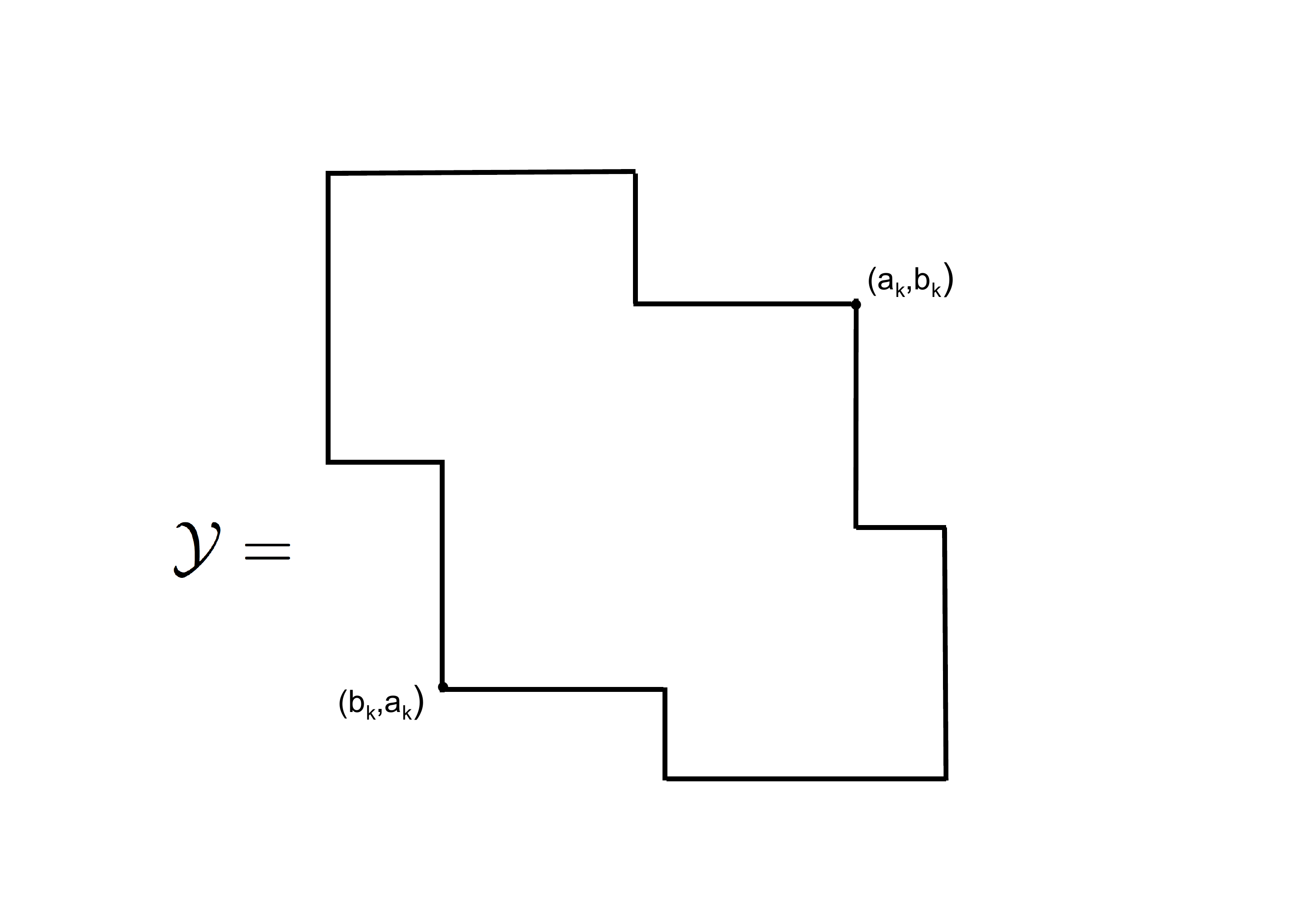}
\caption{A Ladder}
\end{figure}

We do not assume that a ladder $\Y$ is connected, nor that $X$ is the 
smallest skew-symmetric matrix
having $\Y$ as ladder. 

It is easy to see that any ladder can be decomposed as a union of square subladders
\begin{equation}\label{decomp}
\Y=\X_1\cup\ldots\cup \X_s
\end{equation} 
where
 $$\X_k=\{(i,j)\;|\; a_k\le i,j \le b_k\},$$ 
for some  integers $1\leq a_1\leq\ldots\leq a_s\leq n$ and $1\leq b_1\leq\ldots\leq
b_s\leq n$ such that $a_k<b_k$ for all $k$.
We say that $\Y$ is the ladder with {\em upper
corners} $(a_1,b_1),\ldots,(a_s,b_s)$, and that $\X_k$ is the
square subladder of $\Y$ with upper outside corner $(a_k,b_k)$.
We allow two upper corners to have the
same first or second coordinate, however we assume that no two
upper corners coincide. Notice that with this convention a
ladder does not have a unique decomposition of the form
(\ref{decomp}). In other words, a ladder does not
correspond uniquely to a set of
upper corners $(a_1,b_1),\ldots,(a_s,b_s)$. However, a ladder is
determined by its upper corners as in (\ref{decomp}). 
Moreover, the upper corners of a ladder $\mathcal Y$ determine both the subladders 
$\X_k$ and the smallest skew-symmetric submatrix of $X$ that 
has $\Y$ as ladder. 

Given a ladder $\mathcal Y$ we set $Y=\{x_{ij}\in X\;|\; (i,j)\in
{\mathcal Y},\; i<j\}$, and denote by  $K[Y]$ the polynomial
ring $K[x_{ij}\;|\; x_{ij}\in Y]$. If $p$ is a positive integer, we let  
$I_{2p}(Y)$ denote the ideal generated by the set of the
$2p$-pfaffians of $X$ which involve only indeterminates of
$Y$. In particular $I_{2p}(X)$ is the ideal of $K[X]$ generated by 
the $2p$-pfaffians of $X$.
\end{defn}

Whenever we consider a ladder $\Y$, we assume that it comes with its
set of upper corners and the corresponding decomposition as a
union of square subladders as in (\ref{decomp}). 

Notice that the set of upper corners as given in our definition
contains all the usual upper outside corners, and may contain some of
the usual upper inside corners, as well as other elements of the
ladder which are not corners of the ladder in the usual sense. 

\begin{ex}\label{excorners}
Consider the set of upper corners $\{(1,2), (1,4), (3,4), (3,6),(4,7)\}$. Then
  $k=5$ and $$\X_1=\begin{array}{cc}
(1,1) & (1,2) \\
(2,1) & (2,2)\end{array}\;\;\;
\X_2=\begin{array}{cccc}
(1,1) & (1,2) & (1,3) & (1,4) \\
(2,1) & (2,2) & (2,3) & (2,4) \\
(3,1) & (3,2) & (3,3) & (3,4) \\
(4,1) & (4,2) & (4,3) & (4,4)
\end{array}\;\;\;
\X_3=\begin{array}{cc}
(3,3) & (3,4) \\
(4,3) & (4,4) \end{array}$$ 
$$\X_4=\begin{array}{cccc}
(3,3) & (3,4) & (3,5) & (3,6) \\
(4,3) & (4,4) & (4,5) & (4,6) \\
(5,3) & (5,4) & (5,5) & (5,6) \\
(6,3) & (6,4) & (6,5) & (6,6)
\end{array}\;\;\; \X_5=\begin{array}{cccc}
(4,4) & (4,5) & (4,6) & (4,7) \\
(5,4) & (5,5) & (5,6) & (5,7) \\
(6,4) & (6,5) & (6,6) & (6,7) \\
(7,4) & (7,5) & (7,6) & (7,7)
\end{array}$$
The ladder determined by this choice of upper corners is 
$$\Y=\X_1\cup\X_2\cup\X_3\cup\X_4\cup\X_5=\begin{array}{ccccccc}
(1,1) & (1,2) & (1,3) & (1,4) & & & \\
(2,1) & (2,2) & (2,3) & (2,4) & & & \\
(3,1) & (3,2) & (3,3) & (3,4) & (3,5) & (3,6) & \\
(4,1) & (4,2) & (4,3) & (4,4) & (4,5) & (4,6) & (4,7) \\
 & & (5,3) & (5,4) & (5,5) & (5,6) & (5,7) \\
 & & (6,3) & (6,4) & (6,5) & (6,6) & (6,7) \\
 & & & (7,4) & (7,5) & (7,6) & (7,7)
\end{array}$$
$(1,2)$ is the upper outside corner of $\X_1$, $(1,4)$ is the upper
outside corner of $\X_2$, $(3,4)$ is the upper outside corner of
$\X_3$, $(3,6)$ is the upper outside corner of $\X_4$, and $(4,7)$ is
the upper outside corner of $\X_5$.

Notice that our set of upper corners contains $(3,4)$, which in the
usual terminology is referred to as an upper inside corner. However it
does not contain the usual upper inside corner $(4,6)$. Moreover, our
set of upper corners
contains $(1,2)$ which is not a corner in the usual terminology. It
contains also all the usual upper outside
corners, namely $(1,4),(3,6),$ and $(4,7)$.
Let $$X=\left(\begin{array}{ccccccc}
0 & x_{1,2} & x_{1,3} & x_{1,4} & x_{1,5} & x_{1,6} & x_{1,7} \\
-x_{1,2} & 0 & x_{2,3} & x_{2,4} & x_{2,5} & x_{2,6} & x_{2,7} \\
-x_{1,3} & -x_{2,3} & 0 & x_{3,4} & x_{3,5} & x_{3,6} & x_{3,7} \\
-x_{1,4} & -x_{2,4} & -x_{3,4} & 0 & x_{4,5} & x_{4,6} & x_{4,7} \\
-x_{1,5} & -x_{2,5} & -x_{3,5} & -x_{4,5} & 0 & x_{5,6} & x_{5,7} \\
-x_{1,6} & -x_{2,6} & -x_{3,6} & -x_{4,6} & -x_{5,6} & 0 & x_{6,7} \\
-x_{1,7} & -x_{2,7} & -x_{3,7} & -x_{4,7} & -x_{5,7} & -x_{6,7} & 0
\end{array}
\right)$$ be the smallest skew-symmetric matrix having $\Y$ as ladder.
The set of indeterminates corresponding to $\Y$ is
$$Y=\begin{array}{ccccccc}
& x_{1,2} & x_{1,3} & x_{1,4} & & & \\
& & x_{2,3} & x_{2,4} & & & \\
& & & x_{3,4} & x_{3,5} & x_{3,6} & \\
& & & & x_{4,5} & x_{4,6} & x_{4,7} \\
& & & & & x_{5,6} & x_{5,7} \\
& & & & & & x_{6,7}
\end{array}$$
\end{ex}

\begin{defn}\label{ideal}
Let $\Y=\X_1\cup\ldots\cup \X_s$ be a ladder as in
Definition~\ref{ladd}.\newline
Let $X_k=\{x_{i,j}\;|\; (i,j)\in\X_k,\; i<j\}$ for $k=1,\dots,s$.
Fix a vector $t=(t_1,\ldots,t_s)$, $t\in \{1,\ldots,\lfloor\frac{n}{2}\rfloor\}^s$.
The {\em pfaffian ideal} $I_{2t}(Y)$ is by definition the sum of 
pfaffian ideals $I_{2t_1}(X_1)+\ldots+I_{2t_s}(X_s)\subseteq
K[Y]$. Sometimes we refer to these ideals as {\em pfaffian
ideals of ladders}.  
\end{defn}

\begin{ex}
Let $\Y$ be the ladder of Example~\ref{excorners}, together with the same
choice of upper corners. Let $t=(1,2,1,2,2)$, then the pfaffian ideal is
$$I_{2t}(Y)=(x_{1,2},-x_{1,3}x_{2,4}+x_{1,4}x_{2,3},x_{3,4},
-x_{3,5}x_{4,6}+x_{3,6}x_{4,5},x_{4,5}x_{6,7}-x_{4,6}x_{5,7}+x_{4,7}x_{5,6})$$
$$\subseteq
K[x_{1,2},x_{1,3},x_{1,4},x_{2,3},x_{2,4},x_{3,4},x_{3,5},x_{3,6},x_{4,5},
x_{4,6},x_{4,7},x_{5,6},x_{5,7},x_{6,7}].$$
$I_{2t}(Y)$ is the saturated ideal of a variety of codimension 5 in $\PP^{13}$.
\end{ex}

\begin{rmks}\label{conv}
\begin{enumerate}
\item Let  $\mathcal Z\supseteq \mathcal Y$ be two ladders of $X$, 
and let $Z,Y$ be 
the corresponding sets of indeterminates. We have an isomorphism of
  graded $K$-algebras 
$$K[Y]/I_{2t}(Y)\cong K[Z]/I_{2t}(Y)+(x_{ij}\;|\; x_{ij}\in
Z\setminus Y)\cong K[X]/I_{2t}(Y)+(x_{ij}\;|\; x_{ij}\in
X\setminus Y).$$
Here $I_{2t}(Y)$ is an ideal in $K[Y],K[X],$ and $K[Z]$ respectively. 
Then the height of the ideal $I_{2t}(Y)$ does not depend of
whether we regard it as an ideal of $K[Y],K[X],$ or $K[Z]$.
\item We can assume without loss of generality that $$2t_k\leq
  b_k-a_k+1.$$ In fact, if $2t_k>b_k-a_k+1$ then
  $I_{2t_k}(X_k)=0$.
  \item Moreover, we can assume that
  $$a_k-a_{k-1}>t_{k-1}-t_k \;\;\;\mbox{and}\;\;\;
  b_k-b_{k-1}>t_k-t_{k-1}, \;\;\;\;\mbox{for
    $k=2,\ldots,s$.}$$
If $a_k-a_{k-1}\leq t_{k-1}-t_k$, by successively developing each $2t_{k-1}$-pfaffian 
of $X_{k-1}$ with respect to the first $2(a_k-a_{k-1})$ rows and columns, 
we obtain an expression of the pfaffian as a combination of pfaffians of 
size $2t_{k-1}-2(a_k-a_{k-1})\geq 2t_k$ that involve only rows and columns 
from $X_k$. Therefore $I_{2t_k}(X_k)\supseteq I_{2t_{k-1}}(X_{k-1})$.
Similarly, if $b_k-b_{k-1}\leq t_k-t_{k-1}$, by developing each $2t_k$-pfaffian 
of $X_k$ with respect to the last $2(b_k-b_{k-1})$ rows and columns, 
we obtain an expression of the pfaffian as a combination of pfaffians of 
size $2t_k-2(b_k-b_{k-1})\geq 2t_{k-1}$ that involve only rows and columns 
from $X_{k-1}$. Therefore $I_{2t_k}(X_k)\subseteq I_{2t_{k-1}}(X_{k-1})$.
In either case, we can remove a part of the ladder and reduce to the
study of a proper subladder that corresponds to the same
pfaffian ideal. 
For example, if $b_k-b_{k-1}\leq t_k-t_{k-1}$ we can
consider the ladder 
$$\tilde\Y=\X_1\cup\ldots\cup \X_{k-1}\cup
\X_{k+1}\cup\ldots\cup \X_s$$ and let 
$$t'=(t_1,\ldots,t_{k-1},t_{k+1},\ldots,t_s).$$ 
Since  $I_{2t_k}(X_k)\subseteq
I_{2t_{k-1}}(X_{k-1})$, we have $I_{2t}(Y)=I_{2t'}(\tilde Y),$ where
$\tilde Y$ is the set of indeterminates corresponding to $\tilde\Y$.
\end{enumerate}
\end{rmks}

The class of pfaffian ideals that we consider is very large. We now give
examples of interesting families of ideals generated by pfaffians,
which belong to the class of pfaffian ideals that we study.

\begin{exs}
\begin{enumerate}
\item
If $t=(t,\ldots,t)\in\{1,\ldots,\lfloor\frac{n}{2}\rfloor\}^s$
then $I_{2t}(Y)$ is the
ideal generated by the pfaffians of size $2t$ of $X$ that
involve only indeterminates from $Y$. In  \cite{dn98} it is proven
that in this case $K[Y]/I_{2t}(Y)$ is  a Cohen-Macaulay normal domain.

\item
If we choose all the upper corners on the same row, we obtain ideals
of pfaffians of matrices which are contained one in the other. In this
case $1=a_1=a_2=\ldots=a_s$, hence $1<b_1<b_2<\ldots<b_s=n$. By
Remark~\ref{conv} (3), this forces $1\leq t_1<t_2<\ldots<t_s$. The
ideal $I_{2t}(X)$ is generated by the $2t_i$-pfaffians of the
submatrix of the first $b_i$ rows and columns, $i=1,...,s$.

Similarly, if we choose all the upper corners on the same column, that
is, $1=a_1<a_2<\ldots<a_s$ and $b_1=b_2=\ldots=b_s=n$,  we obtain the
ideal generated by the $2t_i$-pfaffians of the last $n-a_i+1$ rows and
columns, $i=1,...,s$, with $t_1>t_2>\ldots>t_s\geq 1$.
Notice that these two choices of upper corners produce the same
family of ideals.

\item  Consider the ladder with two upper corners $(1,b),(1,n)$,
  $b<n$, and the vector $(t_1,t_2)=(t,t+1)$. Then the  ideal
  $I_{2t}(Y)$ is generated by all the $2t+2$-pfaffians  and by the
  $2t$-pfaffians of the first $b$ rows and columns, of an $n\times n$
  skew-symmetric matrix of indeterminates. 
This ideals belong to a well-known class of ideals generated by
pfaffians in a matrix, the cogenerated ideals, which have been studied
in  \cite{dn95}. In fact $I_{2t}(Y)$ is the ideal cogenerated by the
pfaffian $[1,2,\dots, 2t-1,b+1]$. 
Notice however that not every one-cogenerated pfaffian ideal is a
pfaffian ideal of ladders.
\end{enumerate}
\end{exs}

We will show that for every vector $t$ the ideals $I_{2t}(Y)$ are prime (see
Proposition~\ref{primeCM}). Therefore they define reduced and
irreducible projective varieties.

\begin{defn}\label{pfaff}
Let $V\subseteq\PP^r$. $V$ is a
{\em pfaffian variety} if $I_V=I_{2t}(X)$, where $X$ is a skew
symmetric matrix of indeterminates of size $n\times n$.
$V$ is a {\em ladder pfaffian variety} if
$I_V=I_{2t}(Y)=I_{2t_1}(X_1)+\ldots+I_{2t_s}(X_s)$ for some ladder  
$\Y=\X_1\cup\ldots\cup \X_s$ and some  vector 
$t=(t_1,\ldots,t_s)\in \{1,\ldots,\lfloor\frac{n}{2}\rfloor\}^s$.
\end{defn}

Notice that every pfaffian variety is a ladder pfaffian
variety. Therefore, from now on we will only consider ladder
pfaffian varieties. Moreover, in view of Remark~\ref{conv} (1) 
we will not distinguish between
ladder pfaffian varieties and cones over them.

In this section we study ladder pfaffian varieties. We prove that
their saturated ideals are generated by pfaffians of mixed size
contained in a subladder of a skew-symmetric matrix of indeterminates,
or in other words that the ideals in question are prime.  We prove
that the ladder pfaffian varieties are arithmetically Cohen-Macaulay
and projectively normal, and we compute their codimension. We choose
to follow a classical commutative algebra localization argument to
approach the problem. Some of our results could be obtained also by
using a Schubert calculus approach, at least for the case of ideals of
pfaffians of fixed size in a matrix, which define Schubert varieties
in orthogonal Grassmannians.

In order to establish the properties that we just mentioned, we will
make use of a localization argument (analogous to that of Lemma~7.3.3
in~\cite{br93}). The following proposition will be
crucial in the sequel. We use the notation of Definition~\ref{ladd}
and Definition~\ref{ideal}, and refer to Fig. 2.  Notice that if
$t_l\geq 2$ for some $l$, then it is always possible to choose a $k$
such $t_k\geq 2$, $a_{k+1}-1\geq a_k$, and $b_k\geq b_{k-1}+1$. In
fact, it suffices to choose $k$ such that $t_k\geq t_l$ for all $l$,
and the inequalities follow from Remark~\ref{conv} (3). Notice
moreover that for a classical ladder (i.e. a ladder for which no two
vertices belong to the same row or column) these conditions are
automatically satisfied.

\begin{prop}\label{local}
Let $\mathcal Y=\X_1\cup\dots\cup \X_s$ be a ladder of a 
skew-symmetric matrix $X$ of indeterminates. Let
$t=(t_1,\ldots,t_s)\in\{1,\ldots,\lfloor\frac{n}{2}\rfloor\}^s$, 
and let $I_{2t}(Y)$ be the corresponding pfaffian ideal.
Fix $k\in\{1,\ldots,s\}$ such that $t_k\geq 2$, $a_{k+1}-1\geq a_k$,
and $b_k\geq b_{k-1}+1$. Let 
$t'=(t_1,\ldots,t_{k-1},t_k-1,t_{k+1},\ldots,t_s).$ 
Let  $\mathcal Y'$ be the subladder of  $\mathcal Y$ 
with outside corners
$$(a_1,b_1),\dots,(a_{k-1},b_{k-1}),(a_k+1,b_k-1),(a_{k+1},b_{k+1}),\dots,(a_s,b_s).$$

%obtained by removing the elements  $$(a_k,b_{k-1}+1),\ldots,(a_k,b_k),(a_k+1,b_k),\ldots, (a_{k+1}-1,b_k)$$ and the symmetric elements
%$$(b_{k-1}+1,a_k),\ldots,(b_k,a_k),(b_k,a_k+1),\ldots, (b_k,a_{k+1}-1),$$ and let  $Y'=\{x_{i,j}\;|\; (i,j)\in \mathcal Y',\;i<j\}$.

Then there is an isomorphism $$K[Y]/I_{2t}(Y)[x_{a_k,b_k}^{-1}]\cong 
K[Y']/I_{2t'}(Y')[x_{a_k,b_{k-1}+1},\ldots,x_{a_k,b_k},x_{a_k+1,b_k},\ldots,
x_{a_{k+1}-1,b_k}][x_{a_k,b_k}^{-1}].$$
\end{prop}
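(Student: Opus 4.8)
The plan is to realize the isomorphism by the classical ``one-step'' localization used for determinantal ideals, adapted to the skew-symmetric/pfaffian setting. After inverting the entry $x_{a_k,b_k}$, one performs an invertible change of coordinates on the skew-symmetric matrix that clears the row and column through the pivot pair $(a_k,b_k)$, reducing each $2t_k$-pfaffian of $X_k$ to $x_{a_k,b_k}$ times a $2(t_k-1)$-pfaffian of a smaller skew-symmetric submatrix. First I would set up the notation for the pivot: write $c=a_k$, $d=b_k$, and consider the indeterminates $x_{c,j}$ for $j$ in the index range of $\X_k$ (the ``column through the pivot'') and $x_{i,d}$ for $i$ in the range (the ``row through the pivot''). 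These are precisely the variables $x_{a_k,b_{k-1}+1},\ldots,x_{a_k,b_k},x_{a_k+1,b_k},\ldots,x_{a_{k+1}-1,b_k}$ that appear as free polynomial variables on the right-hand side; the hypotheses $a_{k+1}-1\geq a_k$ and $b_k\geq b_{k-1}+1$ guarantee exactly that these variables lie in $Y$ and that the truncated ladder $\Y'$ is again a genuine ladder with the stated corners.

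Next I would describe the coordinate change explicitly. Over the ring with $x_{c,d}^{-1}$ adjoined, define new variables $y_{ij}$ for the entries of the ``Schur complement'' type submatrix obtained by deleting rows/columns $c$ and $d$ and subtracting the correction term $x_{c,d}^{-1}(x_{i,d}x_{c,j}-x_{i,c}x_{d,j})$ (the skew-symmetric analogue of the determinantal Gaussian elimination step). One checks that the $y_{ij}$ together with the row/column variables $x_{c,*}$, $x_{*,d}$ and $x_{c,d}$ form a new polynomial generating set, i.e.\ the change of variables is a graded automorphism of the localized polynomial ring. The key pfaffian identity — essentially Pfaffian expansion / the Schur-complement formula for pfaffians, as used in \cite{kl78a}, \cite{KL}, \cite{dn98} — then says that for any index subset $\mathcal U$ containing $c$ and $d$,
$$[\mathcal U] = \pm\, x_{c,d}\cdot [\mathcal U\setminus\{c,d\}]_{y},$$
where the right-hand pfaffian is taken in the matrix $(y_{ij})$, while for $\mathcal U$ not containing both $c$ and $d$ one rewrites $[\mathcal U]$ in terms of the new variables with no essential change of size. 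Consequently, over the localization, the generators of $I_{2t_k}(X_k)$ become (up to units) the $2(t_k-1)$-pfaffians of the smaller skew-symmetric matrix indexed by $\X_k$ with the first row/column and last row/column removed — which is exactly $\X_k'$ with outside corner $(a_k+1,b_k-1)$ — while the generators of $I_{2t_l}(X_l)$ for $l\neq k$ are unaffected because $\X_l$ does not meet the pivot row or column (here one uses that $(a_k,b_k)$ was chosen with $t_k$ maximal, together with Remark~\ref{conv}(3), so that the pivot row $a_k$ and column $b_k$ lie strictly outside every other $\X_l$). Assembling these, $I_{2t}(Y)$ localized at $x_{a_k,b_k}$ maps isomorphically to $I_{2t'}(Y')$ localized at the same element, tensored up by the polynomial ring in the freed row/column variables, which is the claimed isomorphism.

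The main obstacle I expect is bookkeeping rather than conceptual: verifying precisely that the row-and-column variables through the pivot, once the automorphism is applied, really do decouple as \emph{free} polynomial variables over $K[Y']/I_{2t'}(Y')$ — i.e.\ that no $y_{ij}$ secretly coincides with, or is forced to depend on, one of the $x_{a_k,*},x_{*,b_k}$ — and that the truncated corner data $(a_k+1,b_k-1)$ together with the inherited inequalities from Remark~\ref{conv}(2)--(3) still defines an admissible ladder (in particular that $a_k+1\leq b_k-1$, which follows from $t_k\geq 2$ and $2t_k\leq b_k-a_k+1$, and that the neighboring corner inequalities $a_k+1-a_{k-1}>t_{k-1}-(t_k-1)$ and $b_{k+1}-(b_k-1)>t_{k+1}-(t_k-1)$ hold). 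Both points reduce to the combinatorics of ladders already recorded in Definition~\ref{ladd} and Remark~\ref{conv}, so the proof is a careful but routine transcription of the minors-to-pfaffians dictionary applied to the single pivot $x_{a_k,b_k}$.
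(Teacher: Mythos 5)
Your single-matrix mechanism is right (and is the engine of the paper's proof too, via the localization result of \cite[Proposition~3.2]{dn98}), but the step where you dispose of the other blocks is where the argument breaks. You assert that the generators of $I_{2t_l}(X_l)$ for $l\neq k$ are unaffected ``because the pivot row $a_k$ and column $b_k$ lie strictly outside every other $\X_l$.'' That is false for a connected ladder: the hypotheses $a_{k+1}-1\geq a_k$ and $b_k\geq b_{k-1}+1$ only keep the single entry $(a_k,b_k)$ out of $\X_{k-1}$ and $\X_{k+1}$; the row $a_k$ meets $\X_{k-1}$ whenever $a_k\leq b_{k-1}$ and the column $b_k$ meets $\X_{k+1}$ whenever $a_{k+1}\leq b_k$, which is the typical situation. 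This is already visible in the statement itself: the variables that come out free are only $x_{a_k,b_{k-1}+1},\ldots,x_{a_k,b_k}$ and $x_{a_k+1,b_k},\ldots,x_{a_{k+1}-1,b_k}$, i.e.\ the parts of the pivot row and column \emph{outside} the neighbouring blocks; the remaining pivot-row entries $x_{a_k,j}$ with $j\leq b_{k-1}$ stay in $Y'$ and stay constrained by $I_{2t_{k-1}}(X_{k-1})$. Worse, your Schur-complement substitution $y_{ij}=x_{ij}-x_{a_k,b_k}^{-1}(\cdots)$ is applied to all interior entries of $\X_k$, hence to the entries of the overlap $\X_{k-1}\cap\X_k$ (and $\X_k\cap\X_{k+1}$), which do occur in the pfaffians of $X_{k-1}$ and $X_{k+1}$. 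After the substitution those pfaffians are pfaffians of a matrix with a rank-two skew perturbation on the overlap, and the correction terms (which involve the freed variables and $x_{a_k,b_k}^{-1}$ times \emph{smaller} pfaffians of $X_{k-1}$) are not visibly in the target ideal. So ``unaffected'' is exactly the claim that needs proof, and your justification for it is wrong.

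The paper's proof is organized around precisely this difficulty. Before localizing, it replaces the overlap entries of $X_{k-1}$ (in the illustrative case $s=k=2$, of $X_1$) by \emph{fresh} indeterminates $\tilde x_{u,v}$, forming a matrix $\tilde X_1$ whose variables are disjoint from those of $X_2$, and records the identification by the linear ideal $L=(x_{u,v}-\tilde x_{u,v})$. The one-matrix localization can then be applied to $X_2$ alone without touching $I_{2t_1}(\tilde X_1)$, and $L$ is used afterwards to re-glue the blocks and eliminate the superfluous variables. If you want to push your direct substitution through instead, you must either prove an ideal-theoretic identity showing that the perturbed and unperturbed pfaffian ideals of $X_{k-1}$ agree modulo $I_{2(t_k-1)}(X_k')$, or adopt some decoupling device like the paper's $\tilde X_1$ and $L$. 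As a smaller point, the proposition does not assume $t_k$ maximal (that choice is only discussed before the statement as one way to satisfy the hypotheses), so maximality cannot be invoked inside the proof.
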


\begin{figure}
\centering
\includegraphics[height=20cm,width=10cm,keepaspectratio]{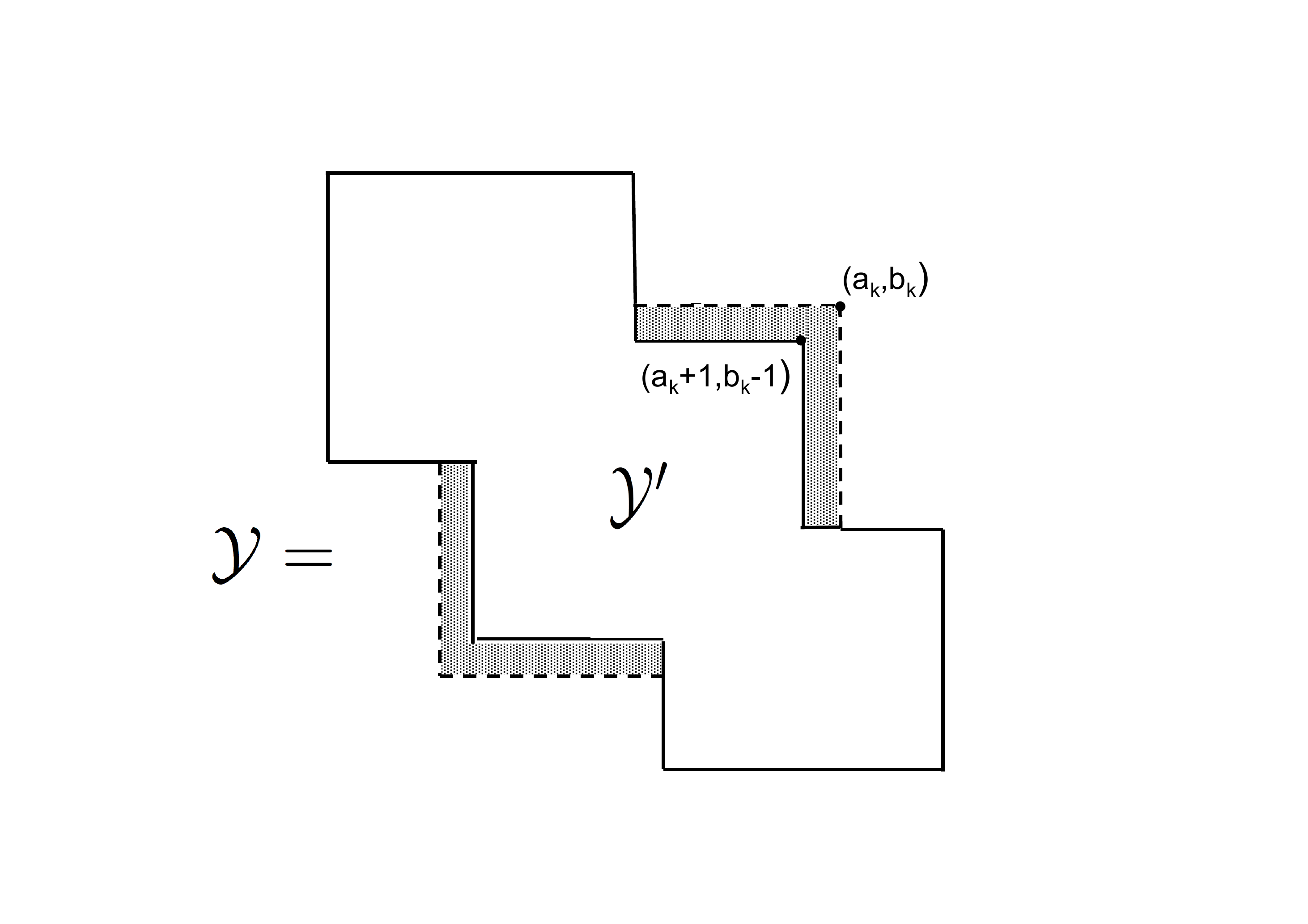}
\caption{}
\end{figure}

\begin{proof}
We prove the proposition in the case $s=k=2$. In the general case the
proof works exactly the same way. 
Let $\Y=\X_1\cup \X_2$ be the ladder with upper corners
$(1,c)$ and $(a,b)$,  $t=(t_1,t_2)$ with $t_2\ge 2$. 
Let $X_1=(x_{i,j})_{1\leq i,j\leq c}$ and $X_2=(x_{i,j})_{a\leq
  i,j\leq b}$ be two submatrices of $X$.
Let  $\tilde{X}_1=(\tilde x_{u,v})$ be a skew-symmetric 
matrix of indeterminates of size $c\times c$, whose entries have indexes
$1\leq u,v\leq c$,  where $\tilde x_{u,v}=x_{u,v}$ if $u<a$ or $v<a$, and  $\tilde x_{u,v}$ are new indeterminates 
whenever $a\le u<v\le c$ (that is  $(u,v)\in \X_2$). 

Let $\tilde{Y}$ be the set of all the indeterminates in  the matrices
$\tilde{X}_1$ and $X_2$, and denote by  
$L$ the ideal $(\{x_{u,v}-\tilde x_{u,v}\}_{a \leq u<v\leq c})$. Then 
$$K[Y]/I_{2t_1}(X_1)+I_{2t_2}(X_2)\cong 
K[\tilde{Y}]/I_{2t_1}(\tilde{X}_1)+I_{2t_2}(X_2)+L.$$
Therefore 
$$K[Y]/(I_{2t_1}(X_1)+I_{2t_2}(X_2))[x_{a,b}^{-1}]\cong
K[\tilde{Y}]/(I_{2t_1}(\tilde{X}_1)+I_{2t_2}(X_2)+
L) [x_{a,b}^{-1}]\cong$$
$$K[\tilde{Y}][x_{a,b}^{-1}]/I_{2t_1}(\tilde{X}_1)^e+I_{2t_2}(X_2)^e+L^e$$
where for any ideal $I$, $I^e$ denotes the extension $IK[\tilde{Y}][x_{a,b}^{-1}]$.
By a well-know localization argument (see for example \cite[Proposition~3.2]{dn98}) we have that 
$$K[X_2][x_{a,b}^{-1}]/I_{2t_2}(X_2)^e\cong 
K[X_2][x_{a,b}^{-1}]/I_{2t_2-2}(X'_2)^e,$$
where $X'_2$ is the submatrix of  $X_2$ obtained by removing the 
$a$-th and the $b$-th row and column. 
Therefore 
$$K[\tilde{Y}][x_{a,b}^{-1}]/
I_{2t_1}(\tilde{X}_1)^e+I_{2t_2}(X_2)^e+
L^e\cong K[\tilde{Y}][x_{a,b}^{-1}]/
I_{2t_1}(\tilde{X}_1)^e+I_{2t_2-2}(X'_2)^e+
L^e\cong $$
$$K[Y]/I_{2t'}(Y')[x_{a,b}^{-1}]\cong K[Y']/I_{2t'}(Y')
[x_{a,c+1},\ldots,x_{a,b},x_{a+1,b}\ldots,
x_{b-1,b}][x_{a,b}^{-1}].$$
\end{proof}

Using Proposition~\ref{local} we can establish some
properties of ladder pfaffian varieties. 

\begin{prop}\label{primeCM}
Pfaffian ideals of ladders define reduced and irreducible,
arithmetically Cohen-Macaulay projectively normal varieties.
\end{prop}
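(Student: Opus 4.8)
The plan is to prove Proposition~\ref{primeCM} by induction on the number of square subladders $s$ and on $\sum_{k=1}^s t_k$, using Proposition~\ref{local} to reduce to a known base case. The properties in question --- being a reduced and irreducible (i.e. prime) ideal, arithmetically Cohen--Macaulay, and projectively normal --- are all preserved under the moves we have available, so the strategy is to peel off one unit from some $t_k$ at a time.

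\textbf{Base cases.} If $t=(1,\ldots,1)$, then $I_{2t}(Y)$ is generated by a subset of the variables of $K[X]$ (each $2$-pfaffian $[i,j]$ is just $x_{ij}$), so $K[Y]/I_{2t}(Y)$ is a polynomial ring, which is certainly a Cohen--Macaulay normal domain. More generally, if all $t_k$ are equal to a common value $t$, then by Example (1) (the result of~\cite{dn98}) $K[Y]/I_{2t}(Y)$ is a Cohen--Macaulay normal domain; and normality together with the graded structure gives projective normality (equivalently, the ring is integrally closed and Cohen--Macaulay, hence in particular the variety is arithmetically Cohen--Macaulay and projectively normal). So we may assume that the $t_k$ are not all equal, and in particular that $t_l\geq 2$ for some $l$. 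By the observation preceding Proposition~\ref{local}, we may then choose $k$ with $t_k\geq 2$, $a_{k+1}-1\geq a_k$, and $b_k\geq b_{k-1}+1$, so that Proposition~\ref{local} applies.

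\textbf{Inductive step.} Apply Proposition~\ref{local} with this choice of $k$: setting $f=x_{a_k,b_k}$, we get
$$
\big(K[Y]/I_{2t}(Y)\big)[f^{-1}]\;\cong\;\big(K[Y']/I_{2t'}(Y')\big)[\text{new variables}][f^{-1}],
$$
where $Y'$ corresponds to a strictly smaller ladder and $\sum t'_k=\sum t_k-1$. By the inductive hypothesis $K[Y']/I_{2t'}(Y')$ is a Cohen--Macaulay normal domain; adjoining polynomial variables and then localizing at $f$ preserves all three properties, so the localization $(K[Y]/I_{2t}(Y))[f^{-1}]$ is a Cohen--Macaulay normal domain. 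It remains to transfer these properties from the localization back to $K[Y]/I_{2t}(Y)$ itself. For this I would argue, as in the classical ladder-determinantal arguments (Lemma~7.3.3 and its surrounding material in~\cite{br93}), that $f$ is a nonzerodivisor on $K[Y]/I_{2t}(Y)$ --- e.g. because modulo the ideal $f$ is a product of variables in a suitable monomial order, so $I_{2t}(Y):f=I_{2t}(Y)$ --- whence $K[Y]/I_{2t}(Y)$ injects into its localization and is therefore a domain (reduced and irreducible). Normality then follows because $R=K[Y]/I_{2t}(Y)$ is a domain with $R[f^{-1}]$ normal and $R$ satisfies Serre's conditions $S_2$ and $R_1$; the $S_2$ part will come from Cohen--Macaulayness, which I establish next.

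\textbf{Cohen--Macaulayness.} The one remaining point is to promote Cohen--Macaulayness of $R[f^{-1}]$ to Cohen--Macaulayness of $R=K[Y]/I_{2t}(Y)$. The cleanest route is to exhibit, modulo $I_{2t}(Y)$, an element (again $f=x_{a_k,b_k}$ or a linear combination of variables) that is a nonzerodivisor and such that $R/fR$ is, up to the localization picture, again of the same form with smaller invariants, allowing us to conclude by induction that $\operatorname{depth} R \ge \dim R$; alternatively, one invokes the standard principle that if $R$ is graded, $R[f^{-1}]$ is Cohen--Macaulay, and $f$ is part of a system of parameters that behaves well, then $R$ is Cohen--Macaulay --- this is exactly the mechanism of Lemma~7.3.3 in~\cite{br93}. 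The codimension bookkeeping needed to match dimensions across the isomorphism of Proposition~\ref{local} is routine (the number of adjoined variables compensates exactly for the drop from $\Y$ to $\Y'$), and is precisely what Proposition~\ref{codim} records. The main obstacle is this last transfer of Cohen--Macaulayness and normality from the localization to the original ring: one must check carefully that $x_{a_k,b_k}$ (or a suitable sum of variables) is a nonzerodivisor on $K[Y]/I_{2t}(Y)$ and set up the induction so that the dimension count is exact; once that is in place, primeness, arithmetic Cohen--Macaulayness, and projective normality all follow simultaneously.
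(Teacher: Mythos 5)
Your overall strategy --- reduce via Proposition~\ref{local} and transfer the known properties across the localization --- is the right family of ideas, but the induction is set up in the wrong direction, and this creates a genuine gap at the descent step. On the right-hand side of Proposition~\ref{local} the element $x_{a_k,b_k}$ is one of the freshly adjoined \emph{free} variables over $K[Y']/I_{2t'}(Y')$, so that side is faithfully flat over the smaller ring and inherits everything from the inductive hypothesis; no problem there. The problem is the left-hand side: there $x_{a_k,b_k}$ is an element of $R=K[Y]/I_{2t}(Y)$ itself, and none of the three properties descends from $R[f^{-1}]$ to $R$ for an element $f\in R$ without substantial extra input. Your claim that $f$ is a nonzerodivisor ``because modulo the ideal $f$ is a product of variables in a suitable monomial order'' is not an argument --- establishing $I_{2t}(Y):f=I_{2t}(Y)$ before knowing the ideal is prime requires a Gr\"obner basis or standard monomial structure for the mixed-size ideal, which is precisely what is not yet available. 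Worse, the principle you invoke for Cohen--Macaulayness (``$R$ graded, $R[f^{-1}]$ Cohen--Macaulay, $f$ part of a system of parameters, hence $R$ Cohen--Macaulay'') is false as stated: take $R=k[x,y]/(x^2,xy)$ and $f=y$; then $R[f^{-1}]\cong k[y,y^{-1}]$ is Cohen--Macaulay but $R$ is not. The genuine mechanism behind Lemma~7.3.3 of \cite{br93} also needs control of $R/fR$ (or an ASL structure), and $K[Y]/(I_{2t}(Y)+(x_{a_k,b_k}))$ is not again a ladder pfaffian ring in any evident way, so your induction has nothing to say about it. Normality at primes containing $f$ fails to follow for the same reason.

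The paper sidesteps this entirely by running the reduction in the opposite direction: it \emph{enlarges} the ladder $\Y$ to $\widehat\Y$ so that the fixed-size ring $K[\widehat Y]/I_{2t_{\max}}(\widehat Y)$ --- known to be a Cohen--Macaulay normal domain by \cite{dn98} --- localizes, via repeated applications of Proposition~\ref{local}, to $K[Y]/I_{2t}(Y)[\Psi][z_1^{-1},\dots,z_p^{-1}]$, where $\Psi=\widehat Y\setminus Y$ and the inverted elements $z_i$ all lie in $\Psi$. Since the $z_i$ are then genuinely free variables over $K[Y]/I_{2t}(Y)$, the ring $K[Y]/I_{2t}(Y)[\Psi][z_1^{-1},\dots,z_p^{-1}]$ is faithfully flat over $K[Y]/I_{2t}(Y)$, and primeness, Cohen--Macaulayness and normality all descend by standard faithfully flat descent. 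If you want to salvage your version, you must either prove the nonzerodivisor and $R/fR$ statements independently (essentially redoing the standard monomial theory for mixed-size pfaffian ladder ideals), or reorganize the induction as the paper does so that every inverted element is a free variable over the ring whose properties you are trying to establish.
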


\begin{proof}
Let $I_{2t}(Y)$ be a pfaffian ideal.
%We can assume without  loss of generality that $t_k\geq 2$ for all $k$. 
Let $t_{\mbox{\tiny
    max}}$ be the maximum of $\{t_1,...,t_s\}$. If $t_{\mbox{\tiny
    max}}=1$ then $I_{2t}(X)$ is generated by indeterminates, und we are done. Assume that $t_{\mbox{\tiny
    max}}\ge 2$. 
Let ${\mathcal {\widehat Y}}$ be the ladder obtained by enlarging
${\mathcal Y}$ along its borders by the  region which increases the
size of every $\X_k$ by $t_{\mbox{\tiny max}}-t_k$. 
Thus ${\mathcal {\widehat Y}}$ is the ladder with upper
corners $(a_k-t_{\mbox{\tiny max}}+t_k,b_k+t_{\mbox{\tiny max}}-t_k)$,
with  $k=1,\ldots,s$.  Let ${\widehat Y}=\{x_{ij}\in X\;|\; (i,j)\in
{\mathcal {\widehat Y}},\; i<j\}$ and let $\Psi={\widehat  Y}\setminus
Y$.
By   Proposition \ref{local}, we can repeatedly localize $K[{\widehat
 Y}]/I_{2t_{\mbox{\tiny max}}}({\widehat  Y})$ at appropriate upper
outside corners and obtain  the original ladder $\Y$ and the pfaffians
of size $t_1,\ldots,t_s$. It follows that there exists 
a subset $\{z_1,...,z_p\}$ of $\Psi$
such that 
$$K[{\widehat  Y}]/I_{2t_{\mbox{\tiny max}}}({\widehat  Y})[z_1^{-1},\ldots, z_p^{-1}]\cong 
K[Y]/I_{2t}(Y)[\Psi][z_1^{-1},\ldots, z_p^{-1}].$$
By \cite[1.2,2.1,3.5]{dn98} one has that $K[{\widehat
  Y}]/I_{2t_{\mbox{\tiny max}}}({\widehat  Y})$ is a Cohen-Macaulay
normal domain, thus  $K[Y]/I_{2t}(Y)[\Psi][u_1^{-1},\ldots, u_q^{-1}]
$ is a Cohen-Macaulay normal domain. Since  $\Psi$ is a set of
indeterminates over $K[Y]/I_{2t}(Y)$ and
$\{u_1,....,u_q\}\subset\Psi$, then also $K[Y]/I_{2t}(Y)[\Psi]$ is a
Cohen-Macaulay normal domain. Hence  $I_{2t}(Y)$ defines a
reduced and irreducible, arithmetically Cohen-Macaulay normal projective
variety.
\end{proof}

A standard argument allows us to compute the codimension of
ladder pfaffian varieties. The notation is the same as in
Proposition~\ref{local}.

\begin{prop}\label{codim}
 Let $\Y$ be a ladder with  upper corners $(a_1,b_1),\ldots,(a_s,b_s)$.
Let  
$${\mathcal L}=\{(i,j)\;|\; a_k+t_k-1\leq i,j\leq b_k-t_k+1,
\;\mbox{for some}\; 1\leq k\leq s\}$$
be a subset of $\Y$. Then ${\mathcal L}$ is a ladder
and the height of $I_{2t}(Y)$ is equal to the cardinality of 
$\{(i,j)\in{\mathcal L} \;|\; i<j\}$.
\end{prop}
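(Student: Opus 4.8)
The plan is to prove the codimension formula by the same kind of localization induction that proved Proposition~\ref{primeCM}, reducing the computation of $\hgt I_{2t}(Y)$ to the known case of a single pfaffian ideal $I_{2p}(X)$ of fixed size $2p$ in a full $n\times n$ skew-symmetric matrix, for which the height is classical (it equals $\binom{n-2p+2}{2}$, the number of strict upper-triangular entries of an $(n-2p+2)\times(n-2p+2)$ block). First I would check that $\mathcal L$ is indeed a ladder: conditions (1) and (2) of Definition~\ref{ladd} are inherited because each defining box $\{a_k+t_k-1\le i,j\le b_k-t_k+1\}$ is a square subladder, and the monotonicity of the $a_k$, $b_k$ together with the inequalities $a_k-a_{k-1}>t_{k-1}-t_k$ and $b_k-b_{k-1}>t_k-t_{k-1}$ from Remark~\ref{conv}(3) guarantee that the boxes overlap or nest in the staircase pattern required by condition (2). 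I would record that $\mathcal L$ has upper corners $(a_k+t_k-1,\,b_k-t_k+1)$, $k=1,\dots,s$, possibly after discarding redundant ones.

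Next, the core step: induction on $t_{\mathrm{max}}=\max\{t_1,\dots,t_s\}$ and on the number of indices achieving it. If $t_{\mathrm{max}}=1$ then $I_{2t}(Y)$ is generated by the indeterminates in $Y$, so its height is $\#\{(i,j)\in\mathcal Y\mid i<j\}$; and in this case $t_k=1$ for all $k$ forces $\mathcal L=\mathcal Y$ (the box around $(a_k,b_k)$ is the whole $\X_k$), so the formula holds. For the inductive step, pick $k$ with $t_k=t_{\mathrm{max}}\ge 2$, $a_{k+1}-1\ge a_k$, $b_k\ge b_{k-1}+1$ as in Proposition~\ref{local}, and apply that proposition: localizing at $x_{a_k,b_k}$ gives
\begin{equation*}
K[Y]/I_{2t}(Y)[x_{a_k,b_k}^{-1}]\cong K[Y']/I_{2t'}(Y')[x_{a_k,b_{k-1}+1},\ldots,x_{a_k,b_k},x_{a_k+1,b_k},\ldots,x_{a_{k+1}-1,b_k}][x_{a_k,b_k}^{-1}],
\end{equation*}
where $t'$ drops the $k$-th entry by one and $\mathcal Y'$ shrinks the $k$-th box by one on each side. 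Since localization at a nonzerodivisor (these rings are domains by Proposition~\ref{primeCM}) and adjoining polynomial variables preserve dimension and codimension, and since the set of adjoined variables has cardinality $(b_k-b_{k-1})+(a_{k+1}-a_k)$, comparing Krull dimensions yields
\begin{equation*}
\hgt I_{2t}(Y) = \hgt I_{2t'}(Y') .
\end{equation*}
Then I would verify that the combinatorial quantity $\#\{(i,j)\in\mathcal L\mid i<j\}$ is also unchanged under the passage $(t,\mathcal Y)\rightsquigarrow(t',\mathcal Y')$: the box of $\mathcal L$ around the $k$-th corner is $\{a_k+t_k-1\le i,j\le b_k-t_k+1\}$ for $(t,\mathcal Y)$ and $\{(a_k+1)+(t_k-1)-1\le i,j\le (b_k-1)-(t_k-1)+1\}$ for $(t',\mathcal Y')$ — these are literally the same box, and all other boxes are untouched, so $\mathcal L=\mathcal L'$. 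By induction the formula holds for $(t',\mathcal Y')$, hence for $(t,\mathcal Y)$.

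Finally I must handle the base of the reduction, namely when $t_{\mathrm{max}}\ge 2$ but the localization process has collapsed the ladder to a single square box — i.e. the case $s=1$, $\mathcal Y=\X_1$ a full $m\times m$ square subladder and $I_{2t}(Y)=I_{2p}(X_1)$ with $p=t_1$. Here one invokes the classical height formula for pfaffian ideals of fixed size in a skew-symmetric matrix of indeterminates (e.g. from the references to \cite{dn98} and the literature cited there on pfaffians of fixed size, such as \cite{kl78a}): $\hgt I_{2p}(X_1)=\binom{m-2p+2}{2}$, which is exactly $\#\{(i,j)\mid p\le i,j\le m-p+1,\ i<j\}=\#\{(i,j)\in\mathcal L\mid i<j\}$ since $\mathcal L$ is the box with corners $(p, m-p+1)=(a_1+t_1-1,\,b_1-t_1+1)$. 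One subtlety is to be careful that, as in the proof of Proposition~\ref{primeCM}, a single localization at $x_{a_k,b_k}$ decreases one $t_k$ but may need to be preceded by first enlarging $\mathcal Y$ to a ladder $\widehat{\mathcal Y}$ with constant vector $t_{\mathrm{max}}$ and then localizing down; I would phrase the induction so that at each stage I am comparing heights across an isomorphism of the type above, and both the height and the count $\#\{(i,j)\in\mathcal L\mid i<j\}$ are seen to be invariant, so that the two agree once they agree at the fixed-size-single-matrix base case.

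\textbf{Main obstacle.} The genuinely delicate point is bookkeeping: making sure the localization reductions of Proposition~\ref{local} can be iterated to reach the fixed-size single-matrix case while keeping $\mathcal L$ literally constant at every step, and checking that the boxes defining $\mathcal L$ assemble into a bona fide ladder (this uses the normalizations in Remark~\ref{conv}(2)--(3) in an essential way). The algebraic input — dimension is preserved under localization at a nonzerodivisor and under adjoining polynomial variables, plus the classical fixed-size pfaffian height — is routine; the combinatorial invariance of the corner-box count under $(t_k,a_k,b_k)\mapsto(t_k-1,a_k+1,b_k-1)$ is the clean identity that makes the induction close.
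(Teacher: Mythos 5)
Your proposal is correct and follows essentially the same route as the paper: verify that $\mathcal L$ is a ladder with upper corners $(a_k+t_k-1,b_k-t_k+1)$ via Remark~\ref{conv}(2)--(3), then induct (the paper uses $\tau=t_1+\cdots+t_s$ rather than $t_{\mathrm{max}}$, which is only a cosmetic difference) using the localization of Proposition~\ref{local} at $x_{a_k,b_k}$ to get $\hgt I_{2t}(Y)=\hgt I_{2t'}(Y')$ while observing that $\mathcal L$ is literally unchanged. Your final paragraph about reducing to the classical fixed-size formula $\binom{m-2p+2}{2}$ and about enlarging to $\widehat{\mathcal Y}$ is unnecessary, since the induction already terminates at the case where all $t_k=1$ and the ideal is generated by indeterminates.
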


\begin{proof}
Observe that $$a_k+t_k-1>a_{k-1}+t_{k-1}-1,\;\;\;\mbox{and}\;\;\; 
b_k-t_k+1>b_{k-1}-t_{k-1}+1$$ 
by Remark~\ref{conv} (3). Moreover by Remark~\ref{conv}
(2) we have
$b_k-a_k>2t_k-2$. Then $$b_k-t_k+1>a_k+t_k-1$$ for all $k$.
Therefore ${\mathcal L}$ is a ladder with upper corners 
$\{(a_k+t_k-1,b_k-t_k+1)\;|\;k=1,\ldots,s\}.$ Notice that $\mathcal
L$ has no two corners on the same row or column. Let
$L=\{x_{i,j}\;|\; (i,j)\in {\mathcal L},\; i<j\}$.

We argue by induction on 
$\tau=t_1+\ldots+t_s\ge s$. If  $\tau=s$, then $t_1=\ldots=t_s=1$, and
${\mathcal L}=\Y$. Moreover,  
$$I_{2}(Y)=(x_{ij}\;|\;x_{ij}\in Y,\; i<j)=(x_{ij}\;|\;x_{ij}\in L,\;
i<j),$$ thus the thesis holds true. 

Assume then that the thesis is true for $\tau-1\ge s$ and prove it for
$\tau $. Since $\tau>s$, then  $t_k\geq 2$ for some $k$. By
Proposition~\ref{local} we have an
isomorphism $$K[Y]/I_{2t}(Y)[x_{a_k,b_k}^{-1}]\cong 
K[Y']/I_{2t'}(Y')[x_{a_k,b_{k-1}+1},\ldots,x_{a_k,b_k},x_{a_k,b_k+1},\ldots,
x_{a_k,b_{k+1}-1}][x_{a_k,b_k}^{-1}].$$
Since $x_{a_k,b_k}$ does not divide zero modulo
$I_{2t'}(Y')$ and $I_{2t}(Y)$, we have
$$\hgt I_{2t}(Y)=\hgt I_{2t'}(Y'). $$  
Notice that the same ladder $\mathcal L$ computes the height of both 
$I_{2t'}(Y')$ and $I_{2t}(Y)$, thus the thesis follows by the
induction hypothesis. 
\end{proof}

\section{Linkage of ladder pfaffian varieties}

In this section we prove that ladder pfaffian
varieties belong to the G-biliaison class of a complete
intersection. The biliaisons are performed on ladder pfaffian
varieties, which are reduced and irreducible (hence
generically Gorenstein), and arithmetically Cohen-Macaulay.
Therefore we can conclude that ladder pfaffian varieties are
glicci. Notice the analogy with determinantal varieties, symmetric 
determinantal varieties
and mixed ladder determinantal varieties, that were treated by
the second author with analogous techniques in~\cite{go05u},
\cite{go05u2}, and~\cite{go06u}.

The following  lemma due  to 
De Concini and Procesi \cite [6.1]{dp76}  will be needed in the
sequel.

\begin{lemma}\label{relazioni}
Let $A$ be a skew symmetric $n\times n$ matrix, $p,m\leq n$ even
integers and $c_1,...,c_p,d_1,...,d_m$ elements of the
set $\{1,...,n\}$.
Then
$$[c_1,...,c_p][d_1,...,d_m]-\sum_{h=1}^{p}
[c_1,...,c_{h-1},d_1,c_{h+1},...,c_p]
[c_h,d_2,...,d_m]=$$
$$\sum_{k=2}^{m}(-1)^{k-1}[d_k,d_1,c_1,...,c_p][d_2,...,d_{k-1},d_{k+1},...,d_m]$$
where $[...]$ denotes a pfaffian of $A$.
\end{lemma}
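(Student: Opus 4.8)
The plan is to deduce the identity purely formally from the cofactor (Laplace) expansion of a pfaffian along one of its arguments, so that no passage to a generic matrix is even needed. Recall that for a skew-symmetric matrix $A=(a_{ij})$ and arguments $e_1,\ldots,e_q$ with $q$ even,
$$[e_1,e_2,\ldots,e_q]=\sum_{j=2}^{q}(-1)^{j}\,a_{e_1e_j}\,[e_2,\ldots,\widehat{e_j},\ldots,e_q],$$
that a pfaffian is alternating in its arguments, and that it vanishes whenever two of its arguments coincide. These are the only facts used, and each of them holds for an arbitrary (not necessarily increasing) list of arguments with the usual sign conventions.

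First I would expand the two families of pfaffians on the left-hand side that share the indices $d_2,\ldots,d_m$. Set $Q_k:=[d_2,\ldots,\widehat{d_k},\ldots,d_m]$ for $k=2,\ldots,m$. Expanding $[d_1,\ldots,d_m]$ along $d_1$ gives $[d_1,\ldots,d_m]=\sum_{k=2}^{m}(-1)^{k}a_{d_1d_k}Q_k$, and expanding $[c_h,d_2,\ldots,d_m]$ along $c_h$ gives $[c_h,d_2,\ldots,d_m]=\sum_{k=2}^{m}(-1)^{k}a_{c_hd_k}Q_k$; the two signs agree because $d_k$ occupies position $k$ in both lists. Substituting these expansions into the left-hand side of the lemma and regrouping the terms according to $Q_k$, the left-hand side becomes
$$\sum_{k=2}^{m}(-1)^{k}\Bigl(a_{d_1d_k}\,[c_1,\ldots,c_p]-\sum_{h=1}^{p}a_{c_hd_k}\,[c_1,\ldots,c_{h-1},d_1,c_{h+1},\ldots,c_p]\Bigr)\,Q_k.$$

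It then suffices to identify, for each fixed $k$, the inner bracket with $-[d_k,d_1,c_1,\ldots,c_p]$; since $(-1)^{k}\cdot(-1)=(-1)^{k-1}$, the displayed sum is then exactly the right-hand side. For this I would expand the $(p+2)$-pfaffian $[d_k,d_1,c_1,\ldots,c_p]$ along its first argument $d_k$: in the list $(d_k,d_1,c_1,\ldots,c_p)$ the entry $d_1$ sits in position $2$ and $c_i$ sits in position $i+2$, so
$$[d_k,d_1,c_1,\ldots,c_p]=a_{d_kd_1}\,[c_1,\ldots,c_p]+\sum_{i=1}^{p}(-1)^{i}a_{d_kc_i}\,[d_1,c_1,\ldots,\widehat{c_i},\ldots,c_p].$$
Sliding $d_1$ from the front of $[d_1,c_1,\ldots,\widehat{c_i},\ldots,c_p]$ into the $i$-th slot costs $i-1$ adjacent transpositions, so this pfaffian equals $(-1)^{i-1}[c_1,\ldots,c_{i-1},d_1,c_{i+1},\ldots,c_p]$; substituting and using the skew-symmetries $a_{d_kd_1}=-a_{d_1d_k}$ and $a_{d_kc_i}=-a_{c_id_k}$ turns the right side into exactly minus the bracket above, as desired.

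The whole computation is elementary once the two expansions are in place; the only place where genuine care is needed is the sign bookkeeping --- tracking the position of $d_1$ and of each $c_i$ in the successive argument lists, and the transpositions used to normalize them. Since every step is just an instance of the expansion rule and the alternating rule, the convention that a pfaffian with a repeated argument vanishes is automatically respected throughout, so the identity also holds when some of the $c_i$ and $d_j$ coincide. (Alternatively, one could observe that it is enough to check the identity for a generic skew-symmetric matrix with distinct arguments and invoke the straightening law for pfaffians, but the direct expansion above seems shorter.)
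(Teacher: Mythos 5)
Your proof is correct; I checked the sign bookkeeping at each stage (the expansion of $[d_1,\ldots,d_m]$ and $[c_h,d_2,\ldots,d_m]$ along their first arguments both put $d_k$ in position $k$, so the common factor $(-1)^k$ is right; and the expansion of $[d_k,d_1,c_1,\ldots,c_p]$ along $d_k$ together with the $(-1)^{i-1}$ from sliding $d_1$ into the $i$-th slot and the two skew-symmetries does yield exactly minus the inner bracket), and the $p=m=2$ case confirms the conventions. Note, however, that the paper does not prove this lemma at all: it is quoted verbatim from De Concini and Procesi \cite[6.1]{dp76}, so your argument is a genuinely different, self-contained route rather than a reconstruction of the paper's. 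What your direct double Laplace expansion buys is twofold: it is completely elementary (only the cofactor expansion and the alternating/vanishing conventions are used), and it makes transparent that the identity holds for arbitrary index lists --- unsorted and with possible repetitions --- which is precisely the generality in which the lemma is later applied in the proof of Theorem~\ref{biliaison} (e.g.\ to pfaffians such as $[b_k,a_k,v_2,\ldots,v_{2t_k-1}]$ and $[v_h,u_2,\ldots,u_{2t_k-1},b_k]$ where arguments may coincide). The citation, by contrast, situates the identity among the standard straightening relations for pfaffians; either justification is acceptable, and yours could stand in the paper as a short self-contained proof.
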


The following result will also be needed in the proof. We
will use it to construct the ladder pfaffian varieties on
which we perform the G-biliaisons. We follow the notation established
in Definitions~\ref{ladd} and~\ref{ideal}.

\begin{lemma}\label{codimW}
Let $V\subseteq\PP^r$ be a ladder pfaffian variety of codimension $c$.
Let $\mathcal Y$ be the ladder corresponding to $V$, and assume that
$t_k=\mmax\{t_1,\ldots,t_s\}\geq 2$. Let $\Z$ be the subladder of $\Y$
with upper corners $$(a_1,b_1),\ldots,(a_{k-1},b_{k-1}),(a_k,b_k-1),
(a_k+1,b_k),(a_{k+1},b_{k+1}),\ldots,(a_s,b_s)$$ and let 
$u=(t_1,\ldots,t_{k-1},t_k,t_k,t_{k+1},\ldots,t_s)$. Then the ladder
pfaffian variety $W\subseteq\PP^r$ with $I_W=I_{2u}(\Z)$ 
has codimension $c-1$.
\end{lemma}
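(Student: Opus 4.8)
The plan is to reduce the computation of $\codim W$ to the codimension formula already established in Proposition~\ref{codim}. First I would check that $\Z$, as defined by the listed upper corners, is genuinely a ladder and that the vector $u$ is admissible, i.e. satisfies the normalizations in Remarks~\ref{conv}. The only delicate point here is that we have split the corner $(a_k,b_k)$ into the two corners $(a_k,b_k-1)$ and $(a_k+1,b_k)$, keeping the multiplicity $t_k$ at both; using $t_k=\mmax\{t_1,\ldots,t_s\}\ge 2$ and the inequalities of Remark~\ref{conv}(3) for $\Y$, together with $b_k-a_k>2t_k-2$ from Remark~\ref{conv}(2), one verifies that consecutive corners of $\Z$ still satisfy $a-a'>t'-t$ and $b-b'>t-t'$, so $\Z$ with the vector $u$ indeed defines a ladder pfaffian variety $W$ in the sense of Definition~\ref{pfaff}.

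Next I would apply Proposition~\ref{codim} to both $V$ and $W$. For $V$ the height of $I_{2t}(Y)$ is the number of entries $(i,j)$ with $i<j$ in the ladder
$$\mathcal L_V=\{(i,j)\;|\; a_l+t_l-1\le i,j\le b_l-t_l+1 \ \mbox{for some}\ l\},$$
whose upper corners are $(a_l+t_l-1,\,b_l-t_l+1)$ for $l=1,\ldots,s$. For $W$ the corresponding ladder $\mathcal L_W$ has upper corners obtained from the corners of $\Z$: for $l\ne k$ they are again $(a_l+t_l-1,\,b_l-t_l+1)$, while the two corners $(a_k,b_k-1)$ and $(a_k+1,b_k)$ with multiplicity $t_k$ contribute the corners $(a_k+t_k-1,\,b_k-t_k)$ and $(a_k+t_k,\,b_k-t_k+1)$. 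So $\mathcal L_W$ agrees with $\mathcal L_V$ away from the $k$-th square, and in the $k$-th square the square $[a_k+t_k-1,\,b_k-t_k+1]^2$ of $\mathcal L_V$ is replaced by the union of the two squares $[a_k+t_k-1,\,b_k-t_k]^2$ and $[a_k+t_k,\,b_k-t_k+1]^2$.

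The heart of the argument is the elementary bookkeeping that this replacement removes exactly one box above the diagonal. Writing $\alpha=a_k+t_k-1$ and $\beta=b_k-t_k+1$, the square $[\alpha,\beta]^2$ is the disjoint union of $[\alpha,\beta-1]^2$, $[\alpha+1,\beta]^2$, their overlap $[\alpha+1,\beta-1]^2$, and the single corner pair $\{(\alpha,\beta),(\beta,\alpha)\}$. Hence as sets of boxes with $i<j$,
$$\#\{(i,j)\in[\alpha,\beta]^2 : i<j\}=\#\{(i,j)\in[\alpha,\beta-1]^2 : i<j\}+\#\{(i,j)\in[\alpha+1,\beta]^2 : i<j\}-\#\{(i,j)\in[\alpha+1,\beta-1]^2 : i<j\}+1,$$
and the box-count for $\mathcal L_W$ in the $k$-th region is the left-hand side minus the $+1$ (because in $\mathcal L_W$ the overlap $[\alpha+1,\beta-1]^2$ is counted once, not twice, and the corner $(\alpha,\beta)$ is absent). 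I would also need to confirm that the modification in the $k$-th square does not interact with the neighboring squares $l=k-1,k+1$ of $\mathcal L_V$ in a way that changes the total count — this is where the strict inequalities from Remark~\ref{conv}(3), verified in the first step for both $\Y$ and $\Z$, guarantee that the relevant squares of $\mathcal L$ overlap in precisely the expected lower-dimensional pieces, so no extra boxes are gained or lost. Combining, $\hgt I_{2u}(\Z)=\hgt I_{2t}(Y)-1$, i.e. $\codim W=c-1$.

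I expect the main obstacle to be purely combinatorial: correctly tracking the boxes of the height-ladder $\mathcal L$ under the corner-splitting, in particular making sure that the overlap squares between the $k$-th region and its neighbors are accounted for exactly once on each side of the comparison. Once the admissibility of $(\Z,u)$ is checked and Proposition~\ref{codim} is invoked, the codimension drop of $1$ follows from the inclusion–exclusion identity displayed above; there is no deeper input needed.
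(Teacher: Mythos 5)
Your reduction to Proposition~\ref{codim} and the box-counting in the generic case match the first paragraph of the paper's proof: there the codimension of $W$ is read off from the ladder $\mathcal L'$ obtained from $\mathcal L$ by deleting the single corner $(a_k+t_k-1,b_k-t_k+1)$ and its mirror image, which is exactly your inclusion--exclusion identity. However, there is a genuine gap in your first step. You assert that the strict inequalities of Remark~\ref{conv}(2),(3), known for $(\Y,t)$, carry over to $(\Z,u)$; they do not. The hypotheses on $\Y$ give $2t_k\le b_k-a_k+1$, $a_{k+1}-a_k\ge t_k-t_{k+1}+1$ and $b_k-b_{k-1}\ge t_k-t_{k-1}+1$, and after splitting the corner these become only the \emph{non-strict} inequalities $2t_k\le (b_k-1)-a_k+2$, $a_{k+1}-(a_k+1)\ge t_k-t_{k+1}$ and $(b_k-1)-b_{k-1}\ge t_k-t_{k-1}$. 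Equality can occur in each, and these are precisely the three degenerate situations the paper enumerates: (1) $2t_k=b_k-a_k+1$, (2) $a_{k+1}-(a_k+1)=t_k-t_{k+1}$, (3) $(b_k-1)-b_{k-1}=t_k-t_{k-1}$. In those cases one of the new pfaffian ideals $I_{2t_k}(X_k^{(1)})$, $I_{2t_k}(X_k^{(2)})$ is zero or is absorbed by a neighbouring one, so Proposition~\ref{codim} cannot be applied to $(\Z,u)$ as written; one must first discard the redundant subladders and then re-do the box count for the smaller normalized ladder.

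This is where the real work of the lemma lies, and it is not a formality: in case (1), for instance, $a_k+t_k-1=b_k-t_k$, so the two would-be corners $(a_k+t_k-1,b_k-t_k)$ and $(a_k+t_k,b_k-t_k+1)$ of your $\mathcal L_W$ sit on the diagonal and contribute no boxes with $i<j$ from the $k$-th region at all; the conclusion $\codim W=c-1$ still holds, but only because the deleted corner $(a_k+t_k-1,b_k-t_k+1)$ was the unique above-diagonal box of that square, which must be checked separately. Similarly in cases (2) and (3) the new corners of $\mathcal L_W$ land on the same row or column as a neighbouring corner of $\mathcal L$, so the count survives for a different reason than the one your inclusion--exclusion provides. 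Your proposal proves the lemma only under the additional (unstated and generally false) hypothesis that $(\Z,u)$ is already normalized; to complete it you need the case analysis that occupies the bulk of the paper's argument.
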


\begin{proof}
We decompose the ladder $\Z$ as
$$\Z=\X_1\cup\ldots\cup \X_{k-1}\cup \X_k^{(1)}\cup \X_k^{(2)} 
\cup \X_{k+1}\cup\ldots\cup \X_s$$ where $\X_k^{(1)}$, $\X_k^{(2)}$
are the square subladders  with upper outside corner
$(a_k,b_k-1)$ and $(a_k+1,b_k)$, respectively. Let 
$u=(t_1,\ldots,t_{k-1},t_k,t_k,t_{k+1},\ldots,t_s),\;\;\;$ $u\in
\{1,\ldots,\lfloor\frac{n}{2}\rfloor\}^{s+1}.$

If the ladder $\Z$ satisfies the inequalities of
Remark~\ref{conv} (2) and (3), then the codimension count follows from
Proposition~\ref{codim}. In fact, the codimension $c$ of $V$
equals the cardinality  of the subset 
$\{(i,j)\in{\mathcal L}\;|\; i<j\}$ where ${\mathcal L}$ is 
the ladder with  upper
corners $(a_1+t_1-1,b_1-t_1+1),\ldots,(a_s+t_s-1,b_s-t_s+1).$ 
The codimension of $W$ equals cardinality of 
$\{(i,j)\in {\mathcal L'}\;|\; i<j\}$, where
${\mathcal L}'$ is the ladder obtained from ${\mathcal L}$ by removing 
$(a_k+t_k-1,b_k-t_k+1)$ and 
$(b_k-t_k+1,a_k+t_k-1)$. So we conclude that $W$ has codimension $c-1$.

Notice however that the ladder $\Z$ may not satisfy the inequalities of
Remark~\ref{conv} (2),(3) even under the assumption that the ladder $\Y$ does. 
In particular, the following three situations may occur:
\begin{enumerate}
\item $2t_k=b_k-a_k+1>b_k-a_k=(b_k-1)-a_k+1=b_k-(a_k+1)+1$, 
\item $a_{k+1}-(a_k+1)=t_k-t_{k+1}$,
\item $(b_k-1)-b_{k-1}=t_k-t_{k-1}$.
\end{enumerate}

In case (1) we delete the subladders $\X_k^{(1)}$ and $\X_k^{(2)}$,
and let $$\Z=\X_1\cup\ldots\cup \X_{k-1}\cup\X_{k+1}\cup\ldots\cup\X_s$$
and $u=(t_1,\ldots,t_{k-1},t_{k+1},\ldots,t_s).$

In case (2) we delete the subladder $\X_k^{(2)}$, and let 
$$\Z=\X_1\cup\ldots\cup \X_{k-1}\cup\X_k^{(1)}\cup\X_{k+1}\cup\ldots\cup\X_s$$
and $u=(t_1,\ldots,t_{k-1},t_k,t_{k+1},\ldots,t_s).$

In case (3) we delete the subladder $\X_k^{(1)}$, and let 
$$\Z=\X_1\cup\ldots\cup \X_{k-1}\cup\X_k^{(2)}\cup\X_{k+1}\cup\ldots\cup\X_s$$
and $u=(t_1,\ldots,t_{k-1},t_k,t_{k+1},\ldots,t_s).$

Notice that it may happen that more than one of the cases (1), (2) and
(3) is verified for the ladder $\Z$. In this case, we behave as if we
were in the situation (1). As we already observed, none of the
operations above affects the ideal $I_W$.

If we are in situation (1), then $2t_k=b_k-a_k+1$. Applying
Proposition~\ref{codim} to the ladder 
$\Z=\X_1\cup\ldots\cup \X_{k-1}\cup\X_{k+1}\cup\ldots\cup\X_s$ we
obtain that the codimension of $W$ equals the cardinality of 
$\{(i,j)\in {\mathcal L'}\;|\; i<j\}$, where
${\mathcal L}'$ is the ladder with upper corners
$$(a_1+t_1-1,b_1-t_1+1),\ldots,(a_{k-1}+t_{k-1}-1,b_{k-1}-t_{k-1}+1),$$
$$(a_{k+1}+t_{k+1}-1,b_{k+1}-t_{k+1}+1),\ldots,
(a_s+t_s-1,b_s-t_s+1).$$
Since $a_k+t_k-1=b_k-t_k$ and $a_k+t_k=b_k-t_k+1$, the cardinality of 
$\{(i,j)\in {\mathcal L'}\;|\; i<j\}$ coincides with the cardinality
of $\{(i,j)\in {\mathcal L''}\;|\; i<j\}$, where $\mathcal L''$ has
upper corners $$(a_1+t_1-1,b_1-t_1+1),\ldots,
(a_{k-1}+t_{k-1}-1,b_{k-1}-t_{k-1}+1),(a_k+t_k-1,b_k-t_k),$$
$$(a_k+t_k,b_k-t_k+1),(a_{k+1}+t_{k+1}-1,b_{k+1}-t_{k+1}+1),\ldots,
(a_s+t_s-1,b_s-t_s+1).$$
Equivalently, ${\mathcal L''}$ is
obtained from ${\mathcal L}$ by removing 
$(a_k+t_k-1,b_k-t_k+1)$ and its symmetric point
$(b_k-t_k+1,a_k+t_k-1)$. Hence the codimension of $W$ is $c-1$.

Similarly, if we are in the situation that both (2) and (3) are
verified, we apply Proposition~\ref{codim} to the ladder $\Z$ of case
(1). The codimension of $W$ equals the cardinality
$\{(i,j)\in {\mathcal L'}\;|\; i<j\}$, where
${\mathcal L}'$ is the same as in case (1). 
Since $a_k+t_k=a_{k+1}+t_{k+1}-1$ and $b_k-t_k=b_{k-1}-t_{k-1}+1$, the cardinality of 
$\{(i,j)\in {\mathcal L'}\;|\; i<j\}$ coincides with the cardinality
of $\{(i,j)\in {\mathcal L''}\;|\; i<j\}$, where $\mathcal L''$ is the
same as in case (1). In fact, the angles
$(a_k+t_k,b_k-t_k+1)$ and $(a_{k+1}+t_{k+1}-1,b_{k+1}-t_{k+1}+1)$ are on the same
row. Moreover the angles
$(a_{k-1}+t_{k-1}-1,b_{k-1}-t_{k-1}+1)$ 
and $(a_k+t_k-1,b_k-t_k)$ are on the same
column. We conclude that the codimension of $W$ is $c-1$.

If we are in situation (2), then $a_{k+1}-(a_k+1)=t_k-t_{k+1}$. 
Assume that $(b_k-1)-b_{k-1}>t_k-t_{k-1}$.
Apply Proposition~\ref{codim} to the ladder 
$\Z=\X_1\cup\ldots\cup
\X_{k-1}\cup\X_k^{(1)}\cup\X_{k+1}\cup\ldots\cup\X_s$.
The codimension of $W$ equals the cardinality of 
$\{(i,j)\in {\mathcal L'}\;|\; i<j\}$, where
${\mathcal L}'$ is the ladder with upper corners
$$(a_1+t_1-1,b_1-t_1+1),\ldots,(a_{k-1}+t_{k-1}-1,b_{k-1}-t_{k-1}+1),
(a_k+t_k-1,b_k-t_k)$$
$$(a_{k+1}+t_{k+1}-1,b_{k+1}-t_{k+1}+1),\ldots,
(a_s+t_s-1,b_s-t_s+1).$$
Since $a_k+t_k=a_{k+1}+t_{k+1}-1$, the cardinality of 
$\{(i,j)\in {\mathcal L'}\;|\; i<j\}$ coincides with the cardinality
of $\{(i,j)\in {\mathcal L''}\;|\; i<j\}$, where $\mathcal L''$ is the
same as in case (1). In fact, the angles
$(a_k+t_k,b_k-t_k+1)$ and $(a_{k+1}+t_{k+1}-1,b_{k+1}-t_{k+1}+1)$ are on the same
row. We conclude that the codimension of $W$ is $c-1$.

An analogous argument applies to situation (3), where we consider 
$\Z=\X_1\cup\ldots\cup\X_{k-1}\cup\X_k^{(2)}\cup\X_{k+1}\cup\ldots\cup\X_s$
and observe that $\X_k^{(1)}$ can be disregarded in the codimension
count, as the angles $(a_{k-1}+t_{k-1}-1,b_{k-1}-t_{k-1}+1)$ and
$(a_k+t_k-1,b_k-t_k)$ are on the same column.
% If we are in situation (3), then $(b_k-1)-b_{k-1}=t_k-t_{k-1}$. 
% Assume that $a_{k+1}-(a_k+1)>t_k-t_{k+1}$. Apply
% Proposition~\ref{codim} to the ladder 
% $\Z=\X_1\cup\ldots\cup
% \X_{k-1}\cup\X_k^{(2)}\cup\X_{k+1}\cup\ldots\cup\X_s$.
% The codimension of $W$ equals the cardinality
% $\{(i,j)\in {\mathcal L'}\;|\; i<j\}$, where
% ${\mathcal L}'$ is the ladder with upper corners
% $$(a_1+t_1-1,b_1-t_1+1),\ldots,(a_{k-1}+t_{k-1}-1,b_{k-1}-t_{k-1}+1),
% (a_k+t_k,b_k-t_k+1)$$
% $$(a_{k+1}+t_{k+1}-1,b_{k+1}-t_{k+1}+1),\ldots,
% (a_s+t_s-1,b_s-t_s+1).$$
% Since $b_k-t_k=b_{k-1}-t_{k-1}+1$, the cardinality of 
% $\{(i,j)\in {\mathcal L'}\;|\; i<j\}$ coincides with the cardinality
% of $\{(i,j)\in {\mathcal L''}\;|\; i<j\}$, where $\mathcal L''$ is the
% same as in case (1). In fact, the angles
% $(a_{k-1}+t_{k-1}-1,b_{k-1}-t_{k-1}+1)$ 
% and $(a_k+t_k-1,b_k-t_k)$ are on the same
% column. We conclude that the codimension of $W$ is $c-1$.
\end{proof}

The next theorem is the main result of this paper. 
The idea of the proof is as follows: starting from a ladder pfaffian 
variety $V$, we construct two ladder pfaffian varieties $V'$ and $W$ 
such that $V$ and $V'$ are generalized divisors on $W$. 
Then we show how $V'$ can be obtained from $V$ by an elementary
G-biliaison on $W$. 

\begin{thm}\label{biliaison}
Any ladder pfaffian variety can be obtained from a linear variety
by a finite sequence of ascending elementary G-biliaisons.
\end{thm}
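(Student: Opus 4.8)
The plan is to induct on the quantity $\tau = t_1 + \cdots + t_s$. If $\tau = s$, then all $t_k = 1$ and $I_{2t}(Y)$ is generated by the indeterminates in $Y$, so $V$ is already a linear variety and there is nothing to prove. For the inductive step, suppose $\tau > s$, so that $t_k = \mmax\{t_1,\ldots,t_s\} \geq 2$ for a suitable index $k$ (which by Remark~\ref{conv}~(3) may be chosen with $a_{k+1}-1\geq a_k$ and $b_k\geq b_{k-1}+1$, as discussed before Proposition~\ref{local}). The strategy is to produce a single ascending elementary G-biliaison that replaces $V$ by the ladder pfaffian variety $V'$ corresponding to the vector $t' = (t_1,\ldots,t_{k-1},t_k-1,t_{k+1},\ldots,t_s)$ and the shrunk ladder $\Y'$ of Proposition~\ref{local}. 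Since $t'_1 + \cdots + t'_s = \tau - 1$, the inductive hypothesis then finishes the argument.

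To perform the G-biliaison I would introduce the intermediate variety $W$ with $I_W = I_{2u}(\Z)$ as in Lemma~\ref{codimW}; by that lemma $W$ has codimension $c-1$, where $c = \codim V$, and $W$ is a ladder pfaffian variety, hence arithmetically Cohen-Macaulay, reduced and irreducible (so generically Gorenstein) by Proposition~\ref{primeCM}. One checks $I_W \subseteq I_V$ and $I_W \subseteq I_{V'}$, so that both $V$ and $V'$ are contained in $W$ with codimension one, i.e. they are effective generalized divisors on $W$. The heart of the matter is to exhibit $V$ and $V'$ as \emph{linearly equivalent} generalized divisors on $W$ up to a twist: concretely, one wants an isomorphism of sheaves $\I_{V'|W} \cong \I_{V|W}(-1)$, or equivalently a homogeneous element realizing the divisor class difference. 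The natural candidate for the ``multiplier'' is the pfaffian $[a_k,\ldots,b_k-1]$ of size $2t_k$ sitting in the subladder $\X_k^{(1)}$ (or symmetrically $[a_k+1,\ldots,b_k]$ in $\X_k^{(2)}$); the exchange relations of Lemma~\ref{relazioni} (De Concini--Procesi) are exactly the tool to show that multiplying the generators of $I_{V'}$ (the pfaffians of size $2t_k-2$ from the shrunk block) by this $2t_k$-pfaffian lands in $I_V$ modulo $I_W$, and conversely.

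Thus the key steps, in order, are: (i) fix $k$ with $t_k$ maximal and verify the hypotheses of Propositions~\ref{local},~\ref{codim} and Lemma~\ref{codimW}; (ii) define $W$ via $I_{2u}(\Z)$ and record $\codim W = c-1$, $\codim V = \codim V' = c$, and the inclusions $I_W\subseteq I_V\cap I_{V'}$; (iii) prove the crucial sheaf isomorphism $\I_{V'|W}\cong \I_{V|W}(-1)$ on $W$, using Lemma~\ref{relazioni} to identify the pfaffian $[a_k,\ldots,b_k-1]$ as the element whose multiplication carries $\I_{V|W}$ isomorphically onto $\I_{V'|W}(-1)$; (iv) invoke the definition of an ascending elementary G-biliaison on the arithmetically Cohen-Macaulay, generically Gorenstein scheme $W$ to conclude that $V'$ is obtained from $V$ by one such step; (v) apply the inductive hypothesis to $V'$, since $\tau$ has dropped by one, and compose the biliaisons.

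The main obstacle I expect is step (iii): verifying that the two restricted ideal sheaves really differ by exactly the twist $\OO_W(-1)$ and that the isomorphism is induced by multiplication by the chosen pfaffian. This requires a careful local (or generic-point) computation on $W$ — checking that $[a_k,\ldots,b_k-1]$ is a nonzerodivisor modulo $I_W$ and that the colon ideal $(I_W : [a_k,\ldots,b_k-1])$ relates $I_V$ and $I_{V'}$ correctly — and it is precisely here that the combinatorics of the ladder corners in the three degenerate cases (1), (2), (3) of Lemma~\ref{codimW} must be handled so that the divisor-class bookkeeping comes out right. The localization isomorphism of Proposition~\ref{local} should be the technical device that reduces this verification to the already-understood case of pfaffians of fixed size in a matrix, much as in the analogous results for (mixed ladder) determinantal varieties in \cite{go05u}, \cite{go05u2}, \cite{go06u}.
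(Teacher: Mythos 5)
Your overall architecture matches the paper's proof exactly: induct on $\tau=t_1+\cdots+t_s$, pass from $V$ to the variety $V'$ given by $t'=(t_1,\ldots,t_k-1,\ldots,t_s)$ on the shrunk ladder $\Y'$, interpose the codimension-$(c-1)$ ladder pfaffian variety $W$ with $I_W=I_{2u}(\Z)$ from Lemma~\ref{codimW}, check $I_W\subseteq I_V\cap I_{V'}$, and realize $V$ and $V'$ as linearly equivalent (up to a twist) generalized divisors on the integral ACM scheme $W$ via the De Concini--Procesi relations. However, your step (iii) --- which you correctly identify as the heart of the matter --- contains a genuine error: the proposed multiplier, a $2t_k$-pfaffian $[a_k,\ldots,b_k-1]$ supported in $\X_k^{(1)}$, is one of the \emph{generators} of $I_{2u}(\Z)=I_W$ (that is precisely how $W$ was built: its ideal contains all $2t_k$-pfaffians of $X_k$ omitting $a_k$ or omitting $b_k$). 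So this element is zero on $W$; it is a zerodivisor, not a nonzerodivisor, modulo $I_W$, and multiplication by it is the zero map on $\OO_W$-modules. It cannot induce any isomorphism between $\I_{V|W}$ and a twist of $\I_{V'|W}$. The degree bookkeeping also fails: a polynomial multiplier of degree $t_k$ could never produce the twist by $1$ that the ascending biliaison requires.

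The correct multiplier is not a polynomial but the degree-one element
$[a_k,u_2,\ldots,u_{2t_k-1},b_k]/[u_2,\ldots,u_{2t_k-1}]$
of the total quotient ring of $W$: the numerator is a $2t_k$-pfaffian involving \emph{both} $a_k$ and $b_k$ (hence not in $I_W$; these are exactly the generators of $I_V$ modulo $I_W$), and the denominator is a $(2t_k-2)$-pfaffian of $X'_k$ (a generator of $I_{V'}$ modulo $I_W$, a nonzerodivisor since $I_W$ is prime and properly contained in $I_{V'}$). Lemma~\ref{relazioni} is used to show that this ratio is independent of the choice of $a_k<u_2<\cdots<u_{2t_k-1}<b_k$ modulo $I_W$, so that multiplication by it carries the generators $[v_2,\ldots,v_{2t_k-1}]$ of $\I_{V'|W}$ onto the generators $[a_k,v_2,\ldots,v_{2t_k-1},b_k]$ of $\I_{V|W}$, giving $\I_{V|W}\cong\I_{V'|W}(-1)$ (note also that your displayed isomorphism has the twist on the wrong side for an \emph{ascending} biliaison from $V'$ to $V$). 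Finally, the degenerate cases (1)--(3) of Lemma~\ref{codimW} and the localization of Proposition~\ref{local} are not needed for this verification; they enter only in the codimension count for $W$.
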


\begin{proof}
Let $V$ be a ladder pfaffian variety. Let $\mathcal Y$ be the
ladder corresponding to $V$,
$$I_V=I_{2t}(Y)=I_{2t_1}(X_1)+\cdots+I_{2t_s}(X_s)\subseteq K[Y].$$
We perform all the linkage steps in $\PP^r=\Proj(K[Y])$.
If $t_1=\ldots=t_s=1$ then $V$ is a linear variety. Therefore we
consider the case when 
$t_k=\mmax\{t_1,\ldots,t_s\}\ge 2$. It follows that
$a_{k+1}-a_k>t_k-t_{k+1}\geq 0$ and $b_k-b_{k-1}>t_k-t_{k-1}\geq 0$,
therefore $a_{k-1}<a_k+1\leq a_{k+1}$ and $b_{k-1}\leq b_k-1<b_{k+1}$. 
Let $\mathcal Y'$ be the ladder with upper corners 
$$(a_1,b_1),\ldots,(a_{k-1},b_{k-1}),(a_k+1,b_k-1), 
(a_{k+1},b_{k+1}),\ldots,(a_s,b_s)$$ 
and let $t'=(t_1,\ldots,t_{k-1},t_k-1,t_{k+1},\ldots,t_s)$.
Let $$\Y'=\X_1\cup\ldots\cup \X_{k-1}\cup \X'_k\cup
\X_{k+1}\cup\ldots \cup \X_s,$$ 
where $\X'_k$ is the square subladder with upper outside  corner 
$(a_k+1,b_k-1)$ (see Figure 2). Notice that all 
the inequalities of Remark~\ref{conv} (2),(3) are satisfied by $\Y',t'$.
Let $V'$ be the ladder pfaffian variety 
with saturated ideal $I_{V'}=I_{2t'}(Y').$
By Proposition~\ref{codim} and Remark~\ref{conv} (1), $V$ and $V'$ have the same
codimension $c$ in $\PP^r=\Proj (K[Y])$. In fact, in both cases $c$
equals the cardinality  of the subset 
$\{(i,j)\in{\mathcal L}\;|\; i<j\}$ where ${\mathcal L}$ is 
the ladder with  upper
corners $(a_1+t_1-1,b_1-t_1+1),\ldots,(a_s+t_s-1,b_s-t_s+1).$

Let $\mathcal Z$ be the ladder obtained from $\Y$ by removing 
$(a_k,b_k)$ and $(b_k,a_k)$ (see Figure 3), and let $u=(t_1,\dots,t_{k-1},t_k,t_k,t_{k+1},\dots,t_s)$. 

\begin{figure}
\centering%
\includegraphics[height=20cm , width=10cm,keepaspectratio]  {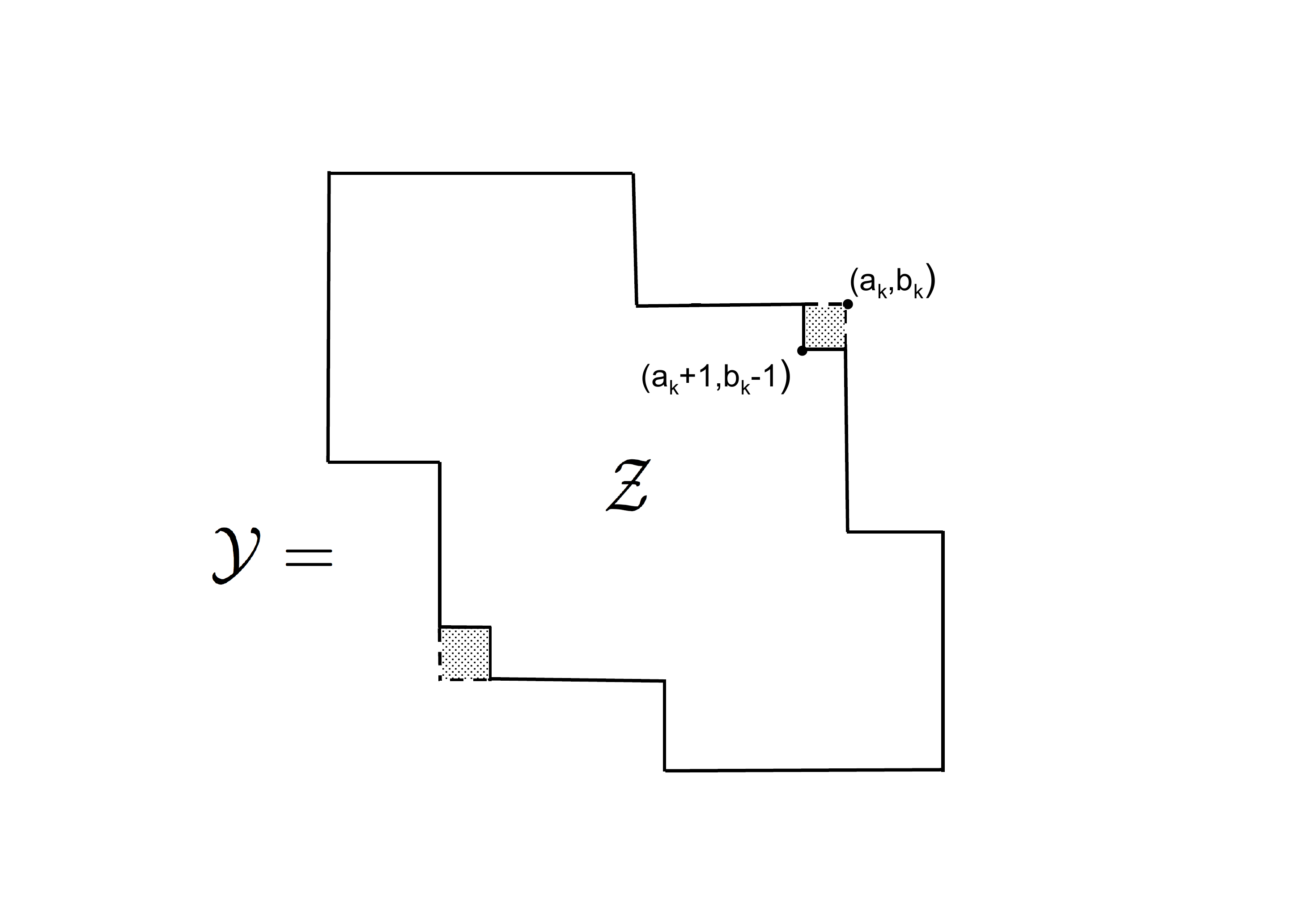}
\caption{}			
\end{figure}

Let $W$ be the ladder pfaffian variety with $I_W=I_{2u}(Z)$. $W$ has codimension $c-1$ in
$\PP^r=\Proj (K[Y])$, by Lemma~\ref{codimW}. 
%Notice that we can
%replace the ladder $\Z$ as done in Lemma~\ref{codimW} without changing
%the ideal $I_W$. Hence we can assume that $\Z$ satisfies the
%inequalities in Remark~\ref{conv} (2).
Clearly $I_V\supseteq I_W$, therefore $V\subseteq W$. We claim
that also $V'\subseteq W$. In order to show that
$I_{2t'}(Y')\supseteq I_{2u}(Z)$, it suffices to show that
$I_{2t_k}(X_k^{(1)})+I_{2t_k}(X_k^{(2)})\subseteq
I_{2(t_k-1)}(X'_k)$. Let $f=[u_1,\dots,u_{2t_k}]$ be a
$2t_k$-pfaffian in $I_{2t_k}(X_k^{(1)})+I_{2t_k}(X_k^{(2)})$, 
with $a_k\le u_1<u_2<\cdots\le u_{2t_k}\le b_k$. If 
$a_k,b_k\not\in\{u_1,\ldots,u_{2t_k}\}$, then 
$f\in I_{2t_k}(X'_k)\subseteq I_{2(t_k-1)}(X'_k)$. 
If $a_k=u_1$ then $b_k\not\in \{u_2,\ldots,u_{2t_k}\}$, since
$f$ does not involve the indeterminate $x_{a_k,b_k}$. 
By expanding $f$ along the $u_1$-th row and column one has
$$f=\sum_{h=1}^{2t_k}\pm[u_1,u_h][u_2,\ldots,\check u_h,\ldots,u_{2t_k}].$$
The notation $[...,\check{u_i},...]$ means that the index $u_i$ is not
involved in the pfaffian.  
Since $[u_2,\ldots,\check u_h,\ldots,u_{2t_k}]\in
I_{2(t_k-1)}(X'_k)$, one has $f\in I_{2(t_k-1)}(X'_k)$.
Similarly, if $u_{2t_k}=b_k$ then
$a_k\not\in\{u_1,\ldots,u_{2t_k-1}\}$, 
and expanding $f$ along the $u_{2t_k}$-th row and column the
conclusion follows.

The variety $W$ is reduced, irreducible, and arithmetically
Cohen-Macaulay by Proposition~\ref{primeCM}. In particular it is
generically Gorenstein. Therefore we can regard $V$ and $V'$ as
generalized divisors on $W$ (see \cite{ha04u2} about the theory
of generalized divisors). Then $V$ and $V'$ are G-bilinked 
on $W$ if and only if $V\sim V'+mH$ for some $m\in\ZZ$, where $H$ 
is the hyperplane section divisor on $W$. This is
in turn equivalent to 
\begin{equation}\label{isom}
\I_{V|W}\cong\I_{V'|W}(-m)\end{equation} 
as subsheaves of the sheaf of total quotient rings of $W$.
In order to construct an isomorphism as~(\ref{isom}), we prove
that 
$$\frac{[a_k,u_2,\ldots,u_{2t_k-1},b_k]}{[u_2,\ldots,u_{2t_k-1}]}=
\frac{[a_k,v_2,\ldots,v_{2t_k-1},b_k]}{[v_2,\ldots,v_{2t_k-1}]}$$
modulo $I_W$, for any choice of $u_i, v_i$ such that  
$a_k<u_2<\ldots<u_{2t_k-1}<b_k$ and $a_k<v_2<\ldots<v_{2t_k-1}<b_k.$
Then multiplication by 
$\frac{[a_k,u_2,\ldots,u_{2t_k-1},b_k]}{[u_2,\ldots,u_{2t_k-1}]}$ for
a fixed choice of $a_k<u_2<\ldots<u_{2t_k-1}<b_k$
yields an isomorphism as~(\ref{isom}).
By Lemma~\ref{relazioni} one has
$$[v_2,\ldots,v_{2t_k-1}][a_k,u_2,\ldots,u_{2t_k-1},b_k]=$$
$$\sum_{h=2}^{2t_k-1} [v_2,\ldots,v_{h-1},
a_k,v_{h+1},,\ldots,v_{2t_k-1}][v_h,u_2,\ldots,u_{2t_k-1},b_k]+$$
$$+\sum_{l=2}^{2t_k-1}(-1)^{l-1}[u_l,a_k,v_2,\ldots,v_{2t_k-1}]
[u_2,\ldots,\check u_l,\ldots, u_{2t_k-1},b_k]+$$
$$(-1)^{2t_k-1}[b_k,a_k,v_2,\ldots,v_{2t_k-1}][u_2,\ldots,u_{2t_k-1}].$$
Since the pfaffians  
$[v_h,u_2,\ldots,u_{2t_k-1},b_k],[u_l,a_k,v_2,\ldots,v_{2t_k-1}]$ are in
$I_W$ for every $h$ 
and $l$, one has that 
$[v_2,\ldots,v_{2t_k-1}][a_k,u_2,\ldots,u_{2t_k-1},b_k]=
-[b_k,a_k,v_2,\ldots,v_{2t_k-1}][u_2,\ldots,u_{2t_k-1}]=
[a_k,v_2,\ldots,v_{2t_k-1},b_k][u_2,\ldots,u_{2t_k-1}]$
modulo $I_W$.
Therefore the isomorphism~(\ref{isom}) holds, and $V$ and $V'$
are G-bilinked on $W$. Repeating this procedure, one eventually
gets to the pfaffians of order 2 of the ladder ${\mathcal L}$ 
with upper corners $\{(a_k+t_k-1,b_k-t_k+1)\;|\;
k=1,\ldots,s\}.$ 
Clearly $I_2(L)=(x_{ij}\;|\; (i,j)\in {\mathcal L}, i<j)$ defines a linear variety.
\end{proof}

Kleppe, Migliore, Mir\'o-Roig, Nagel, and Peterson proved
in~\cite{kl01} that a G-biliaison on an arithmetically
Cohen-Macaulay, $G_1$ scheme can be realized by two Gorenstein
links. In~\cite{ha04u2} Hartshorne generalized their result to a
G-biliaison on an arithmetically Cohen-Macaulay, generically
Gorenstein scheme. Therefore we get the following corollary.

\begin{cor}\label{glicci}
Every ladder pfaffian variety $V$ can be G-linked in
$2(t_1+\ldots+t_s-s)$ 
steps to a linear variety of the same codimension.
In particular, ladder pfaffian varieties are glicci.
\end{cor}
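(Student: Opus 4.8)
The plan is to deduce Corollary~\ref{glicci} from Theorem~\ref{biliaison} together with the structural results in Section~1 and the cited comparison between G-biliaison and Gorenstein linkage. First I would recall the setup: by Theorem~\ref{biliaison}, any ladder pfaffian variety $V$ is obtained from a linear variety by a finite sequence of ascending elementary G-biliaisons. Tracking the proof of Theorem~\ref{biliaison}, each elementary G-biliaison step lowers the quantity $\tau=t_1+\ldots+t_s$ by exactly one (a single entry $t_k$ with $t_k=\mmax\{t_1,\ldots,t_s\}$ is decreased by $1$, passing from $V$ to $V'$), and the process terminates precisely when $t_1=\ldots=t_s=1$, i.e. when $\tau=s$ and $V$ has become the linear variety defined by $I_2(L)$. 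Hence the number of elementary G-biliaison steps is $\tau-s=t_1+\ldots+t_s-s$.

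Next I would invoke the cited results: by \cite{kl01} for arithmetically Cohen-Macaulay $G_1$ schemes, and by \cite{ha04u2} for arithmetically Cohen-Macaulay generically Gorenstein schemes, a single elementary G-biliaison on such a scheme can be realized by two Gorenstein links. The hypotheses are met at every stage because all the varieties $W$ on which the biliaisons are performed are ladder pfaffian varieties, hence reduced, irreducible, and arithmetically Cohen-Macaulay by Proposition~\ref{primeCM}; being reduced and irreducible they are in particular generically Gorenstein (indeed $G_1$). Therefore each of the $t_1+\ldots+t_s-s$ elementary G-biliaison steps is replaced by two Gorenstein links, giving a chain of $2(t_1+\ldots+t_s-s)$ Gorenstein links from $V$ to a linear variety. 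Since Gorenstein links preserve codimension, the linear variety has the same codimension $c$ as $V$, and a linear variety is trivially a complete intersection; thus $V$ is glicci.

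The only point requiring a little care is to check that the intermediate varieties produced when one converts an elementary G-biliaison into two Gorenstein links still satisfy the hypotheses needed to continue the chain — but this is exactly the content of the theorems of \cite{kl01} and \cite{ha04u2}, which guarantee that an ascending elementary G-biliaison of arithmetically Cohen-Macaulay generically Gorenstein schemes is realized by linking through an arithmetically Gorenstein scheme, with the resulting schemes remaining in the appropriate class. So no additional verification is needed beyond citing these results and counting steps. I expect the main (indeed only) subtlety to be the bookkeeping of the step count: one must observe from the proof of Theorem~\ref{biliaison} that each biliaison decreases $\tau$ by exactly $1$ and the recursion bottoms out at $\tau=s$, so the count $t_1+\ldots+t_s-s$ is correct and the corollary follows.
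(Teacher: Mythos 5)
Your proposal is correct and follows essentially the same route as the paper: Theorem~\ref{biliaison} supplies $t_1+\cdots+t_s-s$ ascending elementary G-biliaisons (each step decreases $\tau=t_1+\cdots+t_s$ by exactly one until $\tau=s$), and each biliaison is converted into two Gorenstein links via the results of \cite{kl01} and \cite{ha04u2}, with the arithmetically Cohen-Macaulay and generically Gorenstein hypotheses on the intermediate varieties $W$ guaranteed by Proposition~\ref{primeCM}. The only minor overstatement is your parenthetical claim that reduced and irreducible implies $G_1$ --- it only yields generically Gorenstein --- but since Hartshorne's version in \cite{ha04u2} requires no more than that, the argument is unaffected.
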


\begin{rmk}
The varieties cut out by pfaffians of fixed size of a
skew-symmetric matrix (i.e. those for which $Y=X$ and
$t_1=\ldots=t_s$) are known to be arithmetically Gorenstein 
(see~\cite{a79} and~\cite{KL}). 
In~\cite{dn98} ideals generated by pfaffians of fixed size in a
ladder are considered, and a characterization is given of the
ones defining arithmetically Gorenstein varieties. The results in \cite{co96}
play a central role in the argument.
It turns out that the arithmetically Gorenstein varieties are
essentially 
only those cut out by
pfaffians of fixed size of a skew-symmetric matrix, and a few
more cases that are directly connected to those. 
Notice that combining Proposition~\ref{local} and the results
in~\cite{dn98}, one easily obtains a characterization of the
arithmetically Gorenstein ladder pfaffian varieties for pfaffians
of mixed size, in terms of the upper outside corners of the
ladder and of the vector $t$. The technique of Theorem~6.3.1 
in ~\cite{go00} applies to this situation.

Arithmetically Gorenstein schemes are known to be glicci (see
Theorem~7.1 of~\cite{ca04u}). Notice however that only very special 
ladder pfaffian varieties are arithmetically Gorenstein.
Moreover, the question of whether every glicci scheme belongs to the
G-biliaison class of a complete intersection remains open.
\end{rmk}

\end{document}